\def\la{\langle}
\def\ra{\rangle}
\def\cL{\mathcal{L}}
\def\cH{\mathcal{H}}
 \newtheorem{theorem}{Theorem}[section]
 \newtheorem{corollary}{Corollary}[section]
 \newtheorem{lemma}{Lemma}[section]
 \newtheorem{proposition}{Proposition}[section]
 \newtheorem{definition}{Definition}[section]
 \newtheorem{remark}{Remark}[section]
 \numberwithin{equation}{section}
\newcommand{\beq}{\begin{equation}}
\newcommand{\eeq}{\end{equation}}
\begin{document}
\title{  Finite dimensional global and exponential attractors
for a coupled time-dependent Ginzburg-Landau equations for atomic
Fermi gases near the BCS-BEC crossover}
\author{Jie {\sc Jiang}\thanks{Institute of Applied Physics and
Computational Mathematics, PO Box 8009, Beijing 100088, \ P. R.
China, \textsl{jiangbryan@gmail.com}.} , Hao {\sc
Wu}\thanks{Shanghai Key Laboratory for Contemporary Applied
Mathematics and School of Mathematical Sciences,\ Fudan University,
Han Dan Road No. 220, Shanghai 200433,\ P. R. China,
\textsl{haowufd@yahoo.com}. Corresponding author.}\ \ \ and \ Boling
{\sc Guo}
\thanks{Institute of Applied Physics and Computational Mathematics, PO Box 8009, Beijing 100088, \ P. R. China,
\textsl{gbl@iapcm.com}.}}

\date{\today}
\maketitle
\begin{abstract} We study a coupled nonlinear evolution system arising from the Ginzburg-Landau
theory for atomic Fermi gases near the BCS-BEC crossover.  First, we
prove that the initial boundary value problem generates a strongly
continuous semigroup on a suitable phase-space which possesses the
global attractor. Then we establish the existence of an exponential
attractor. As a consequence, we show that the global attractor is of
finite fractal dimension.
\end{abstract}
{\bf Keywords}: time-dependent Ginzburg-Landau equations, BCS-BEC
crossover, global attractor, exponential attractors.

\section{Introduction }

The superfluidity in the ultra-cold atomic Fermi gases has been paid
much attention by many researchers in recent years, since it
provides a useful testbed for the study of high-temperature
superconductivity in strongly correlated fermionic systems. In the
superfluid atomic Fermi gases near the Feshbach resonance the strong
attractive interaction is realized between fermion atoms which can
cause a crossover from the weak-coupling BCS state to the
strong-coupling BEC one \cite{OG,MK}. The Ginzburg-Landau theory
plays an important role in superconductivity research which was
applied in the pioneering works \cite{DZ,MRE} and later in the
single-component fermion system (single channel model)
\cite{BP,TWD}. Recently, Machida \& Koyama \cite{MK} developed a
time-dependent Ginzburg-Landau (TDGL) theory for the superfluid
atomic Fermi gases near the Feshbach resonance from the
fermion-boson model on the basis of the functional integral
formalism. This two-component TDGL model describes the dynamics of
the superfluid atomic Fermi gases, in which BCS pairs and tightly
bound diatomic condensate coexist. The resulting system consists of
a nonlinear time-dependent complex Ginzburg-Landau equation coupled
with a Schr\"{o}dinger type equation, which reads as follows (cf.
\cite{MK})
 \beq
 \label{1}\begin{cases}-id u_t=\left(-\frac{dg^2+1}{U}+a\right)u+g[a+d(2\nu-2\mu)]
 \phi+\frac{c}{4m}\Delta u+\frac{g}{4m}(c-d)\Delta \phi\\ \quad \quad \quad \quad -b|u+g\phi|^2(u+g\phi),
\\
i\phi_t=-\frac{g}{U}u+(2\nu-2\mu)\phi-\frac{1}{4m}\Delta\phi.\end{cases}
 \eeq
$u$ and $\phi$ are both complex-valued unknown functions, which
stand for the fermion-pair field and the condensed boson field,
respectively. $2\nu$ is the threshold energy of the Feshbach
resonance while $g$ is the coupling constant in the Feshbach
resonance. $\mu$ is the chemical potential and $U>0$ denotes the BCS
coupling constant. The coefficients $a,b$ and $c$ correspond to the
Ginzburg-Landau coefficients in the TDGL theory. All these seven
coefficients are real numbers. The coefficient $d$ is generally
complex, which dominates the dynamics of the superfluid atomic Fermi
gases. In the BCS limit, $d$ can be considered to be purely
imaginary while in the BEC region, the imaginary part of $d$ usually
vanishes. In the BCS-BEC crossover region, both the real and
imaginary parts of $d$ have finite values that $d$ is a complex
number (see e.g., (A3) below). For the detailed discussions on these
physical coefficients, we refer to \cite{MK}.

By introducing a new variable
 $$v=u+g\phi,$$
  we can transform the original system \eqref{1}
  into the following form,
  which is more convenient to be treated from the mathematical point of view (cf. \cite{Chen1,Chen2,FJG09}):
 \beq
 \label{2}
 \begin{cases}d v_t-\left(a-\frac{1}{U}\right)iv-\frac{i g}{U}\phi-\frac{ic}{4m}\Delta v+ib|v|^2v=0,\\
 \phi_t-\frac{i g}{U}v+\frac{ig^2}{U}\phi+i(2\nu-2\mu)\phi-\frac{i}{4m}\Delta\phi=0.
 \end{cases}
 \eeq
  In Chen \& Guo \cite{Chen2}, the authors proved the existence and uniqueness
  of weak solutions to \eqref{2} subject to periodic boundary
  conditions.  Later in \cite{Chen3}, the global
  existence of weak solutions to the periodic boundary value problem of system \eqref{2} with
  a general nonlinearity $ib|v|^pv$ was obtained for certain power $p$ instead of $2$. As far as the
  classical solution is concerned, Chen \& Guo \cite{Chen1}
   studied the initial boundary problem of \eqref{2} subject
   to homogeneous Dirichlet/Neumann boundary conditions for arbitrary spatial dimension.
   They proved the global existence and uniqueness of classical solutions under some specific
   restrictions on the complex coefficient $d$. However, no results on the long-time behavior of the
   global weak/classical solutions to \eqref{2} were obtained in the papers \cite{Chen2,Chen1,Chen3} mentioned
   above. One possible difficulty is that we do not have enough
   dissipative mechanism in the equation for $\phi$. As a first step for the study of the long-time dynamics of the
problem, in the present paper,
 we consider the system \eqref{2} with
a linear weak dissipation in the equation for $\phi$, that is
 \beq\label{3}
 \begin{cases}dv_t-(a-\frac{1}{U})iv-\frac{i g}{U}\phi-\frac{ic}{4m}\Delta v+ib|v|^2v=f,\quad (x,t)\in \Omega \times \mathbb{R}_+,\\
 \phi_t+\gamma \phi-\frac{ig}{U}v+\frac{ig^2}{U}\phi+i(2\nu-2\mu)\phi-\frac{i}{4m}\Delta\phi=h,\quad (x,t)\in \Omega \times \mathbb{R}_+,
 \end{cases}
 \eeq
 where $\gamma>0$ is the damping parameter. For the sake of simplicity, we consider the problem
 in a bounded domain $\Omega \subset\mathbb{R}^3$ whose boundary $\Gamma$ is smooth.
 $f$ and $h$ are given external forces. System \eqref{3} is subject to the homogeneous Dirichlet boundary conditions
 \beq\label{6}
  v=\phi=0,\qquad (x,t)\in \Gamma \times \mathbb{R}_+,
 \eeq
 and the initial conditions
 \beq\label{4} v|_{t=0}=v_0(x),\quad \phi|_{t=0}=\phi_0(x),\qquad x\in \Omega.
 \eeq

To formulate our results, we first introduce some notions on the
functional settings. Let $\cL^2(\Omega)$ (or $L^2(\Omega)$) be the
Lebesgue space of complex-valued (real-valued) functions. By
$(\cdot,\cdot)$ and $\|\cdot\|$, we denote the scalar product and
the norm in $\cL^2(\Omega)$ (or $L^2(\Omega)$), respectively:
$$ (w_1,w_2)=\int_{\Omega} w_1\overline{w_2}dx,\quad \| w\|=\sqrt{(w,w)}.$$
Let $W^{k,p}(\Omega)$ be the standard Sobolev spaces for real-valued
functions and as usual, $H^k(\Omega)=W^{k,2}(\Omega)$.
Correspondingly, Sobolev spaces of complex-valued functions are
denoted by $\mathcal{W}^k(\Omega)$ and similarly,
$\cH^k(\Omega)=\mathcal{W}^{k,2}(\Omega)$. We note that
$L^p(\Omega)=W^{0,p}(\Omega)$,
$\cL^p(\Omega)=\mathcal{W}^{0,p}(\Omega)$ and $\cH^{-1}(\Omega)$ (or
$H^{-1}(\Omega)$) is the dual space of $\cH^1_0(\Omega)$ (or
$H^1_0(\Omega)$).

Let $A$ be the unbounded linear operator defined by $ A=-\Delta$,
whose domain is $D(A)=H^2(\Omega)\cap H^1_0(\Omega)$. It is
well-known that (cf. e.g., \cite{Temam}) one can define spaces
$D(A^s)$ for $s\in\mathbb{R}$, with inner product $\la \cdot,
\cdot\ra_{s}=(A^\frac {s}{2}\cdot, A^\frac {s}{2}\cdot)$ and
corresponding norm $|\cdot|_s=\sqrt{\la \cdot, \cdot\ra_{s}}$. In
particular, $D({A^\frac12})=H_0^1(\Omega),\ D({A^0})=L^2(\Omega),\
D({A^{-\frac12}})=H^{-1}(\Omega)$. We note that corresponding
results hold for the complex-valued functional spaces.

In this paper,  we make the following assumptions on external forces
$f$, $h$ and the coefficients of system \eqref{3}:

(A1) $f,h\in \cH^1_0(\Omega)$ are independent of time,

(A2) $U>0, b>0, c>0, m>0$, $aU<1$, $\gamma>0$,

(A3) $d:=d_r+i d_i$ where $d_r, d_i\in \mathbb{R}$ and $d_i>0$.
$|d|=\sqrt{d_r^2+d_i^2}$.\\

Next, we introduce the weak formulation of problem
\eqref{3}-\eqref{4}:
 \begin{definition}
 A pair of complex-valued functions $(v,\phi)$ is called a weak solution
 to problem \eqref{3}-\eqref{4} in $Q_T:= \Omega \times [0,T]$
 for arbitrary $T>0$, if
 \beq v,\phi\in C([0,T], \cH^1_0(\Omega)), \quad v_t\in L^2((0,T),
 \cL^2(\Omega)),\quad \phi_t\in L^2((0,T),\cH^{-1}(\Omega)),\nonumber
 \eeq
and for arbitrary complex-valued functions $\psi \in
\cH^1_0(\Omega)$ and $\xi\in C^1[0,T]$ with $\xi(T)=0$, it holds
 \begin{eqnarray}
 && \int_0^T \left[-d(v,\xi_t\psi) -i\left(a-\frac{1}{U}\right)( v, \xi \psi) -\frac{i g}{U}(\phi,\xi \psi)
 +\frac{ic}{4m} (\nabla v,\xi \nabla \psi)+ib(|v|^2v,\xi
 \psi)\right]dt\nonumber\\
 &=& \int_0^T( f, \xi w)dt+ (v_0, \xi(0)\psi), \label{w1}
 \\
 && \int_0^T\left[ -(\phi,\xi_t\psi)-\frac{i g}{U}(v,\xi\psi)+i\left(\frac{g^2}{U}+2\nu-2\mu\right)
 (\phi,\xi\psi)+\frac{i}{4m}(\nabla\phi,\xi\nabla\psi)+\gamma(\phi,\xi\psi)\right]\nonumber\\
 &=&\int_0^T ( h,\xi\psi)dt+ (\phi_0, \xi(0)\psi). \label{w2}
 \end{eqnarray}
\end{definition}
\noindent The main results of this paper are as follows:

 (a)
Existence and uniqueness of global weak solutions (cf. Theorem
\ref{T2.1} and Corollary \ref{uniw});

 (b) Existence of a global attractor with
finite fractal dimension (cf. Theorem \ref{GA} and Corollary
\ref{GA1});

 (c) Existence of an exponential attractor (cf. Theorem
\ref{EPA}).

We note that in the recent paper \cite{FJG09}, the authors also
considered the long-time behavior of system \eqref{1} with a linear
weak dissipation term in the equation for $\phi$. In particular,
they proved the existence of a weakly compact attractor under some
specific restrictions on the coefficients $\gamma$, $g$, $c$ and $d$
when the spatial dimension is three. However, comparing their
results, our present work has some new features. (i) We prove the
existence of an absorbing set in $\cH_0^1\times \cH_0^1$ for our
problem \eqref{3}-\eqref{4} under much simpler assumptions on the
physical coefficients (cf. (A2), (A3)). In \cite{FJG09}, the
corresponding result was obtained under some rather specific
restrictions on the coefficients. For instance, it was required that
$0<g<2$ and the positive damping parameter $\gamma$, denoted by
$\beta$ in \cite{FJG09}, was assumed to be bounded from below by a
positive constant such that
$\beta>\frac{\frac{g}{|d|U}+\frac{g}{U}+1}{2-g}>0$.  Although the
weakly damped system considered in \cite{FJG09} is slightly
different from ours in the formulation, by a careful calculation,
one can obtain the same \emph{a priori} estimates without those
restrictions therein. (ii) The equation for $\phi$ is a
Schr\"{o}dinger type equation, which does not enjoy the smoothing
property like parabolic equations. To show the precompactness of
$\phi$, we use a suitable decomposition to split the trajectory into
two parts: one decays exponentially fast to zero, and the other one
satisfies a certain compactness property. We recall that in
\cite{FJG09}, no results on the compactness of weak solutions were
obtained and only the existence of a weakly compact attractor was
proved. (iii) We prove the finite dimensionality (in terms of
fractal dimension) of the global attractor and the existence of an
exponential attractor. Although the global attractor represents the
first important step in the understanding of long-time dynamics of a
given evolutionary problem, it may also present some severe
drawbacks. Indeed, as simple examples show, the rate of convergence
to the global attractor may be arbitrarily slow. This fact makes the
global attractor very sensitive to perturbations and to numerical
approximation. In addition, it is usually very difficult to estimate
the rate of convergence to the global attractor and to express it in
terms of the physical parameters of the system. The concept of
exponential attractor has then been proposed in \cite{EFNT} to
possibly overcome these drawbacks. The exponential attractors
contain the global attractor, are finite dimensional, and attract
the trajectories exponentially fast. Comparing with the global
attractor, an exponential attractor turns out to be much more robust
to perturbations. Besides, it provides a way of proving that the
global attractor has finite fractal dimension. We refer to
\cite{MZsur} for a survey. In this paper, we apply a simple method
that also works in Banach spaces, due to \cite{EMZ} (see
\cite{GGMP,BG,EMZ1} for generalizations) to prove the existence of
an exponential attractor. As a byproduct, we obtain the finite
fractal dimensionality of the global attractor.

The remaining part of this paper is organized as follows.
 In Section 2, we prove the existence and uniqueness of global weak solutions to
 problem \eqref{3}-\eqref{4}. In Section
3, we show that problem \eqref{3}-\eqref{4}
 possesses a compact global attractor $\mathcal{A}$ in $\cH^1_0\times
 \cH^1_0$. In the last Section 4,
 we prove the existence of an exponential attractor $\mathcal{E}$, whose basin of attraction is the whole space
 $(\cH^2\cap\cH^1_0)\times
 (\cH^2\cap\cH^1_0)$.

\section{Global Existence and Uniqueness of Weak Solutions}
\noindent \setcounter{equation}{0} In order to prove the existence
of weak solutions to problem \eqref{3}-\eqref{4}, we shall use the
Faedo-Galerkin method to find approximate solutions. After deriving
some uniform \emph{a priori} estimates for the approximate solution,
we can pass to the limit. We denote by $C$ and $C_i$ positive
constants that may vary from place to place. Special dependence will
be indicated if it is necessary.

\begin{theorem}\label{T2.1}
 Suppose that assumptions (A1)-(A3) are satisfied.
 For any $(v_0,\phi_0)\in \cH^1_0(\Omega)\times \cH^1_0(\Omega)$
 and $T>0$, the initial boundary value problem \eqref{3}-\eqref{4}
 admits a global weak solution $(v,\phi)$.
\end{theorem}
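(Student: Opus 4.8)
The plan is to use the Faedo–Galerkin method as announced. Let $\{w_j\}_{j=1}^\infty$ be the eigenfunctions of $A=-\Delta$ with homogeneous Dirichlet boundary conditions, which form an orthogonal basis of $\cL^2(\Omega)$ and are also orthogonal in $\cH_0^1(\Omega)$. For each $n$ we set $V_n=\mathrm{span}\{w_1,\dots,w_n\}$ and seek approximate solutions $v^{(n)}(t)=\sum_{j=1}^n \alpha_j^{(n)}(t) w_j$, $\phi^{(n)}(t)=\sum_{j=1}^n \beta_j^{(n)}(t) w_j$ solving the projected system obtained by testing \eqref{3} with each $w_j$, $j=1,\dots,n$, with initial data $P_n v_0$, $P_n \phi_0$ where $P_n$ is the orthogonal projection onto $V_n$. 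Since $d\ne 0$ (because $d_i>0$ by (A3)) the resulting system can be solved for $\dot\alpha^{(n)}$, so by the Cauchy–Lipschitz theorem (the cubic term is locally Lipschitz on the finite-dimensional space) we obtain a unique local $C^1$ solution on some interval $[0,T_n)$; the uniform a priori estimates below will show $T_n=+\infty$.

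The heart of the proof is the a priori estimates, which I would derive in three layers. First, an $\cL^2$ estimate: multiply the first equation of the projected system by $\overline{v^{(n)}}$, integrate over $\Omega$, and take real parts. Because $d=d_r+id_i$, the term $\mathrm{Re}(d\,(v_t^{(n)},v^{(n)})) = \frac{d_r}{2}\frac{d}{dt}\|v^{(n)}\|^2 + d_i\,\mathrm{Im}(v_t^{(n)},v^{(n)})$, so one must keep track of the imaginary part too; therefore I would simultaneously multiply the first equation by $-i\overline{v^{(n)}}$ and take real parts, and multiply the second equation by $\overline{\phi^{(n)}}$ and take real parts, obtaining from the $\gamma$-term the dissipation $\gamma\|\phi^{(n)}\|^2$. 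Forming a suitable linear combination of these identities (with weights chosen using $b>0$, $c>0$ so that the cubic term $b\|v^{(n)}\|_{\cL^4}^4\ge 0$ contributes with a good sign and the cross terms $\frac{g}{U}(\phi^{(n)},v^{(n)})$ are absorbed), and using Young's inequality on the right-hand side $(f,\cdot)+(h,\cdot)$, yields a differential inequality of the form $\frac{d}{dt}\mathcal{Y}_0 + \kappa\mathcal{Y}_0 \le C(\|f\|^2+\|h\|^2)$ with $\mathcal{Y}_0 \sim \|v^{(n)}\|^2+\|\phi^{(n)}\|^2$, whence a uniform bound in $C([0,T];\cL^2)\cap L^2(0,T;\cH^1_0)$-type norms. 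Second, an $\cH^1_0$ estimate: multiply the first equation by $\overline{A v^{(n)}}= -\overline{\Delta v^{(n)}}$ (legitimate since $v^{(n)}\in V_n\subset D(A)$) and the second by $\overline{A\phi^{(n)}}$; the Laplacian terms give $\frac{c}{4m}\|\nabla v^{(n)}\|^2$-type and $\gamma\|\nabla\phi^{(n)}\|^2$ dissipation after taking real/imaginary parts as above, the cubic term is controlled in $\mathbb{R}^3$ via $\||v^{(n)}|^2 v^{(n)}\|\le C\|v^{(n)}\|_{\cL^6}^3\le C\|\nabla v^{(n)}\|^3$ together with the already-established $\cL^2$ and $L^2(\cH^1_0)$ bounds and a Gronwall/uniform-Gronwall argument, and $f,h\in\cH_0^1$ (assumption (A1)) lets us integrate by parts the forcing terms. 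This gives a uniform bound for $(v^{(n)},\phi^{(n)})$ in $L^\infty(0,T;\cH^1_0\times\cH^1_0)$. Third, from the equations themselves one reads off $v_t^{(n)}$ bounded in $L^2(0,T;\cL^2)$ and $\phi_t^{(n)}$ bounded in $L^2(0,T;\cH^{-1})$.

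Finally I would pass to the limit. The uniform bounds give, up to a subsequence, $v^{(n)}\rightharpoonup v$, $\phi^{(n)}\rightharpoonup\phi$ weakly-* in $L^\infty(0,T;\cH^1_0)$, with $v_t\in L^2(\cL^2)$, $\phi_t\in L^2(\cH^{-1})$; by the Aubin–Lions–Simon lemma (using $\cH^1_0\hookrightarrow\hookrightarrow\cL^2\hookrightarrow\cH^{-1}$) the convergence is strong in $L^2(0,T;\cL^2)$ and, after a further subsequence, a.e.\ in $Q_T$. This a.e.\ convergence plus the $\cL^6$ bound handles the nonlinear term: $|v^{(n)}|^2v^{(n)}\to|v|^2v$ weakly in, say, $L^{2}(0,T;\cL^{2})$ (or one uses that it is bounded in $L^\infty(0,T;\cL^2)$ and converges a.e.). All the linear terms pass to the limit by weak convergence, and the test functions of the form $\xi(t)\psi$ with $\psi\in V_n$ are dense, so the limit pair satisfies the weak formulation \eqref{w1}–\eqref{w2}; the time-continuity $v,\phi\in C([0,T];\cH_0^1)$ (or at least $C_w$, upgraded to strong continuity by an energy-equality argument) and the attainment of the initial data follow in the standard way. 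The main obstacle is the first a priori estimate: because the leading coefficient $d$ in the $v$-equation is complex and the $\phi$-equation is of Schrödinger type with only the weak damping $\gamma\phi$, one does not get dissipation "for free" in each equation separately, and the correct coercive combination — testing the $v$-equation against both $\overline{v^{(n)}}$ and $-i\overline{v^{(n)}}$ and balancing against the $\phi$-equation so that the cross term $\frac{g}{U}(\phi^{(n)},v^{(n)})$ and the conservative Schrödinger part are controlled using only (A2)–(A3) — is the delicate point; once this energy structure is identified, the $\cH^1_0$ estimate and the limit passage are routine.
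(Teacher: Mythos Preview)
Your overall architecture (Galerkin approximation, layered a~priori estimates, Aubin--Lions compactness, passage to the limit) matches the paper, and your first-layer $\cL^2$ estimate via combining the real and imaginary parts of $(v\text{-equation},\overline{v^{(n)}})$ is a legitimate alternative to the paper's scheme (in fact the paper uses exactly this cancellation trick later, in \eqref{vvt}--\eqref{vvta}).

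There is, however, a genuine gap in your second layer. Testing the $v$-equation only with $-\Delta\overline{v^{(n)}}$ and bounding the cubic term crudely by $\||v^{(n)}|^2v^{(n)}\|\le C\|\nabla v^{(n)}\|^3$ does \emph{not} close in three dimensions: after Young's inequality you are left with
\[
\frac{d}{dt}\|\nabla v^{(n)}\|^2 \;\le\; C\|\nabla v^{(n)}\|^6 + \text{(lower order)},
\]
a superlinear ODE that your first-layer bounds ($v^{(n)}\in L^\infty(\cL^2)\cap L^2(\cH^1_0)\cap L^4(\cL^4)$) do not control; neither ordinary nor uniform Gronwall applies, since you only have $\int_0^T\|\nabla v^{(n)}\|^2\,dt$ bounded, not $\int_0^T\|\nabla v^{(n)}\|^4\,dt$. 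Your real/imaginary cancellation trick at this level does not help either: it introduces an extra $\frac{|d_r|}{d_i}\int|v^{(n)}|^2|\nabla v^{(n)}|^2$ with the wrong sign whenever $|d_r|>d_i$, and (A3) imposes no such restriction.

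The paper's remedy---and this is the point you are missing---is to test the $v$-equation also with $\overline{v^{(n)}_t}$ and take the imaginary part (see \eqref{ap2}). This produces $d_i\|v^{(n)}_t\|^2$ as dissipation and, crucially, turns the cubic term into the exact time derivative $\frac{b}{4}\frac{d}{dt}\|v^{(n)}\|_{\cL^4}^4$, so that $\|\nabla v^{(n)}\|^2+\|v^{(n)}\|_{\cL^4}^4$ becomes part of the energy functional $\Upsilon_1$ in \eqref{U1}. The bad $d_r\,\mathrm{Im}\!\int v^{(n)}_t\overline{v^{(n)}}$ and $d_r\,\mathrm{Im}\!\int v^{(n)}_t\Delta\overline{v^{(n)}}$ terms arising in \eqref{ap1} and \eqref{ap3} are then absorbed by this new $\|v^{(n)}_t\|^2$ dissipation, and a linear Gronwall on $\Upsilon_1$ closes the estimate. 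With this in hand, your third layer (reading off $v^{(n)}_t\in L^2(\cL^2)$, $\phi^{(n)}_t\in L^2(\cH^{-1})$) and your limit passage go through as you describe.
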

\begin{proof}
 \emph{Step 1. Galerkin's approximation}

Let $\{\omega_j\}, j=1,2,...$ be a system of eigenfunctions of the
operator $A$, that is,
 \beq -\Delta \omega_j=\lambda_j\omega_j,
\quad \text{in}\  \Omega, \quad \text{and}\ \  \omega_j=0,\quad
\text{on} \ \Gamma,
 \eeq
 where $0<\lambda_1 \leq \lambda_2\leq ...$ are the eigenvalues.
 It is easy to see that $\{w_j\}$ forms base functions of
 $H^1_0(\Omega)$ as well as $L^2(\Omega)$. Moreover, $\omega_j\in C^\infty$, $j\in \mathbb{N}$.

Let $l$ be a given positive integer. We denote the approximate
solutions of problem \eqref{w1}-\eqref{w2} by $v_l(x,t)$ and
$\phi_l(x,t)$ such that
 $ v_l(x,t)=\sum_{j=1}^l\alpha_{jl}(t)\omega_j(x)$, $
 \phi_l(x,t)=\sum_{j=1}^l\beta_{jl}(t)\omega_j(x)$,
 where $\alpha_{jl}(t)$, $\beta_{jl}(t)$, $(j=1,2,...,l)$ are complex-valued
 functions that satisfy the following system of ordinary differential equations of first
  order: for $j=1,2,...,l$,
 \beq
  d( v_{lt},\omega_j)-i\left(a-\frac{1}{U}\right)(
v_l,\omega_j)-\frac{i g}{U}(\phi_l,\omega_j)-\frac{ic}{4m}(\Delta
v_l,\omega_j)+i b(|v_l|^2v_l,\omega_j)=( f, \omega_j),\label{x1}
 \eeq
 \beq
(\phi_{lt},\omega_j)-\frac{i g}{U}(
v_l,\omega_j)+i\left(\frac{g^2}{U}+2\nu-2\mu\right)(\phi_l,\omega_j)-\frac{i}{4m}(\Delta\phi_l,\omega_j)+\gamma(\phi_l,\omega_j)=(
h,\omega_j),\label{x2}
 \eeq
 with the initial data
 \beq (v_l(0), \omega_j)=\eta_{jl},\quad (\phi_l(0), \omega_j)=\zeta_{jl}. \label{x3}
 \eeq
$\eta_{jl}, \zeta_{jl}$ are constants such that as $l\to +\infty$
 \beq \sum_{j=1}^l \eta_{jl} \omega_j \to v_0, \quad \sum_{j=1}^l\zeta_{jl}\omega_j\to \phi_0,\quad \text{strongly in}
  \ \cH^1_0(\Omega). \nonumber
 \eeq
 Existence of such $\eta_{jl}, \zeta_{jl}$ follows from the fact
 that $(v_0, \phi_0)\in \cH_0^1(\Omega)\times \cH_0^1(\Omega)$ and the
 definition of $\{\omega_j\}$. Actually, we can just take $\eta_{jl}=(v_0,
 \omega_j)$, $\zeta_{jl}=(\phi_0, \omega_j)$.

 The standard theory for nonlinear ordinary differential equations of
first order (i.e., the Picard iteration method) ensures  that for
each $l$, the initial value problem \eqref{x1}-\eqref{x3} admits a
unique local solution $(v_l, \phi_l)$ on $[0,t_0]$ where $t_0$
depends only on $|\xi_{jl}|$ and $|\zeta_{jl}|$. We omit the details
here.

\emph{Step 2. a priori estimates}

We now try to obtain some \emph{a priori} estimates for the
approximate solutions. Multiplying \eqref{x1} by
$\overline{\alpha_{jl}}(t)$, $\frac{d}{dt}\overline{\alpha_{jl}}(t)$
and $\lambda_j\overline{\alpha_{jl}}(t)$, respectively, summing over
$j$ from $1$ to $l$ and taking the imaginary part of the results, we
get
 \begin{eqnarray}
&&
\frac{d_i}{2}\frac{d}{dt}\|v_l\|^2+\left(\frac{1}{U}-a\right)\|v_l\|^2+\frac{c}{4m}\|\nabla
v_l\|^2+b\|v_l\|^4_{\cL^4}\nonumber\\
& =&\mathrm{Im}\int_\Omega f\overline{v_l}
dx+\frac{g}{U}\mathrm{Re}\int_\Omega \phi_l
\overline{v_l}dx-d_r\mathrm{Im}\int_\Omega v_{lt}\overline{v_l}dx\nonumber\\
&\leq&
\frac{d_i}{4}\|v_{lt}\|^2+\|\phi_l\|^2+\|f\|^2+C\|v_l\|^2.\label{ap1}
 \end{eqnarray}
 \begin{eqnarray}
&& \frac{1}{2}\left(\frac{1}{U}-a\right)\frac{d}{dt}\|v_l\|^2
+\frac{c}{8m}\frac{d}{dt}\|\nabla
v_l\|^2+\frac{b}{4}\frac{d}{dt}\|v_l\|^4_{\cL^4}+d_i\|v_{lt}\|^2\nonumber
\\
&=&\mathrm{Im}\int_\Omega f\overline{v_{lt}}
dx+\frac{g}{U}\mathrm{Re}\int_\Omega \phi_l \overline{v_{lt}} dx\leq
\frac{d_i}{2}\|v_{lt}\|^2+C(\|f\|^2+\|\phi_l\|^2).\label{ap2}
 \end{eqnarray}
\begin{eqnarray}
&& \frac{d_i}{2}\frac{d}{dt}\|\nabla
v_l\|^2+\left(\frac{1}{U}-a\right)\|\nabla
v_l\|^2+\frac{c}{4m}\|\Delta v_l\|^2+2b\int_\Omega |\nabla
v_l|^2|v_l|^2 dx
\nonumber\\
&=&-\mathrm{Im}\int_\Omega  f\Delta\overline{v_l}dx
-\frac{g}{U}\mathrm{Re}\int_\Omega \phi_l\Delta\overline{v_l}dx
-b\mathrm{Re}
\int_\Omega \nabla\overline{v_l}\cdot \nabla\overline{v_l}v_l^2dx +d_r\mathrm{Im} \int_\Omega v_{lt}
\Delta \overline v_l dx\nonumber\\
&\leq& b\int_\Omega |\nabla v_l|^2|v_l|^2dx +\frac{c}{8m}\|\Delta
v_l\|^2+C(\|f\|^2+\|\phi_l\|^2)+C_1\|v_{lt}\|^2.\label{ap3}
\end{eqnarray}
 On the other
hand, multiplying \eqref{x2} by $\overline{\beta_{jl}}(t)$,
$\lambda_j\overline{\beta_{jl}}$ and
$\lambda_j^{-1}\frac{d}{dt}\overline{\beta_{jl}}(t)$, respectively,
summing over $j$ form $1$ to $l$, and taking the real part, we
obtain
 \beq
 \frac{1}{2}\frac{d}{dt}\|\phi_l\|^2+\gamma\|\phi_l\|^2
 =\mathrm{Re}\int_{\Omega}h\overline{\phi_l}dx-\frac{g}{U}\mathrm{Im}\int_{\Omega}v\overline{\phi_l}dx
 \leq\frac{\gamma}{2}\|\phi_l\|^2+C(\|h\|^2+\|v_l\|^2).\label{ap1a}
 \eeq
 \begin{eqnarray}
 \frac{1}{2}\frac{d}{dt}\|\nabla\phi_l\|^2+\gamma\|\nabla\phi_l\|^2&=&\mathrm{Re}
 \int_{\Omega}\nabla h\cdot\nabla\overline{\phi_l}dx-\frac{g}{U}\mathrm{Im}\int_{\Omega}\nabla v_l\cdot\nabla\overline{\phi_l}dx
 \nonumber\\
&\leq&\frac{\gamma}{2}\|\nabla \phi_l\|^2+C(\|\nabla h\|^2+\|\nabla
v_l\|^2).\label{ap2a}
 \end{eqnarray}
 \begin{eqnarray}
 \frac{\gamma}{2}\frac{d}{dt}\|\phi_l\|^2_{\cH^{-1}} + \|\phi_{lt}\|^2_{\cH^{-1}}
 &=&\left(\frac{g^2}{U}+2\nu-2\mu\right)\mathrm{Im}\int_\Omega \phi_l\Delta^{-1}\overline{\phi_{lt}}dx -\frac{1}{4m}\mathrm{Im}\int_\Omega
 \phi_l
 \overline{\phi_{lt}}dx  \nonumber\\
 && -\frac{g}{U}\mathrm{Im}\int_\Omega v_l\Delta^{-1}
 \overline{\phi_{lt}}dx +\mathrm{Re}\int_\Omega h\Delta^{-1}\overline{\phi_{lt}}dx\nonumber\\
 &\leq&\frac{1}{2}\|\phi_{lt}\|_{\cH^{-1}}^2+C(\|\phi_l\|_{\cH^{1}}^2+\|v_l\|_{\cH^{-1}}^2+\|h\|_{\cH^{-1}}^2).\label{ap3a}
 \end{eqnarray}
 Multiplying \eqref{ap3} by a small positive constant $\kappa\in (0, \frac{d_i}{8C_1})$,
 adding it with \eqref{ap1}, \eqref{ap2}, \eqref{ap1a}, \eqref{ap2a} together, we obtain
 \begin{equation}
 \frac{d}{dt}\Upsilon_1(t)+\Upsilon_2(t)\leq
 C(\|v_l(t)\|_{\cH^1}^2+\|\phi_l(t)\|^2+\|f\|^2+\|h\|_{\cH^1}^2),\label{gron1}
 \end{equation}
 where
 \begin{eqnarray}
 \Upsilon_1(t)&=& \left(\frac{c}{8m}+\frac{\kappa d_i}{2}\right)\|\nabla
 v_l\|^2+\frac12\left(d_i+\frac1U-a\right)\|v_l\|^2+\frac{b}{4}\|v_l\|_{\cL^4}^4+\frac12 \|\phi_l\|^2_{\cH^1},\label{U1}\\
 \Upsilon_2(t)&=& \frac{\kappa c}{8m} \|\Delta v_l\|^2+
 \left[\frac{c}{4m}+\kappa\left(\frac1U-a\right)\right]\|\nabla v_l\|^2+ b\|v_l\|_{\cL^4}^4+\kappa b\int_\Omega |\nabla
v_l|^2|v_l|^2 dx\nonumber\\
 && +\left(\frac{d_i}{4}-\kappa C_1\right)\|v_{lt}\|^2+\left(\frac{1}{U}-a\right)\|v_l\|^2+
 \frac{\gamma}{2}\|\phi_l\|_{\cH^1}^2.\label{U2}
 \end{eqnarray}
We infer from the assumptions (A2), (A3) and the condition on $\kappa$ that the coefficients of
all the terms in \eqref{U1} and \eqref{U2} are positive. Then it
follows from \eqref{gron1} that
 \beq \frac{d}{dt}\Upsilon_1(t)\leq
 C_2\Upsilon_1(t)+C_3(\|f\|^2+\|h\|_{\cH^1}^2).\nonumber
 \eeq
 By the Gronwall inequality and assumption
 (A1), we conclude that for  arbitrary  $T>0$:
 \beq
 \Upsilon_1(t) \leq e^{C_2t}\left[\Upsilon_1(0)+\frac{C_3}{C_2}(\|f\|^2+\|h\|_{\cH^1}^2)\right] , \quad \forall t\in [0,T].
 \eeq
 As  a  result,
 \beq
 \|v_l(t)\|^2_{\cH^1}+\|\phi_l(t)\|^2_{\cH^1}\leq C_T, \quad \forall t\in
 [0,T],\label{es1}
 \eeq
 where $C_T$ is a constant depending on
 $\|v_0\|_{\cH^1}$, $\|\phi_0\|_{\cH^1}$, $\|f\|$, $\|h\|_{\cH^1}$, $T$, $\Omega$,
 and the coefficients of the system. Turning back to \eqref{gron1} and integrating with respect to
 time, we can see that
 \beq
 \int_0^T \Upsilon_2(t) dt \leq \Upsilon_1(0)+C_2\int_0^T \Upsilon_1(t)
 dt +C_3T(\|f\|^2+\|h\|_{\cH^1}^2)\leq C_T,
 \eeq
 which implies that
 \beq
 \int_0^T (\|v_l(t)\|_{\cH^2}^2+\|v_{lt}\|^2+\|\phi_l(t)\|_{\cH^1}^2 )dt\leq
 C_T.\label{es2}
 \eeq
 Finally, we infer from \eqref{ap3a}, \eqref{es1} and assumptions (A1), (A2) that
 \beq
 \int_0^T \|\phi_{lt}(t)\|_{\cH^{-1}}^2dt\leq C_T.\label{es3}
 \eeq
 The above uniform estimates imply that the solution
 $(\alpha_{1l}(t),...,\alpha_{ll}(t),\beta_{1l}(t),...,\beta_{ll}(t))$ to ODE problem
  \eqref{x1}-\eqref{x3} can be extended to $[0, T]$,
for any $T>0$. Moreover, on $[0,T]$ we have the following uniform
\emph{a priori} estimates:
  \beq
  \begin{cases}
  \label{b1}
  v_l, \ \phi_l \ \ \text{    uniformly bounded in  } L^{\infty}((0,T),\cH^1_0),\\
  v_l \qquad\text{       uniformly bounded in    } L^{2}((0,T),\cH^2),\\
  \phi_l \qquad\text{       uniformly bounded in    } L^{2}((0,T),\cH^1_0),\\
  v_{lt} \qquad\text{       uniformly bounded in    } L^{2}((0,T),\cL^2),\\
  \phi_{lt} \qquad\text{       uniformly bounded in    } L^{2}((0,T),\cH^{-1}),\\
  \end{cases}
  \eeq
  \emph{Step 3. Convergence of the approximate solutions as $l\to +\infty$}

  The uniform bounds \eqref{b1} yield that there exist functions $(v, \phi)$ and subsequences of $\{v_l\}$ and $\{\phi_l\}$ (still
denoted by $\{v_l\}$ and $\{\phi_l\}$ for the sake of simplicity) such that as $l\to +\infty$,
 \beq
 \label{conve}
 \begin{cases}
 v_l\rightarrow v,\ \ \phi_l\rightarrow\phi,  \qquad\text{weakly-* in }\quad L^{\infty}((0,T),\cH^1_0),\\
 v_l\rightarrow v \quad\text{weakly in }\quad L^{2}((0,T),\cH^2),\\
 \phi_l\rightarrow \phi \quad\text{weakly in }\quad L^{2}((0,T),\cH^1_0),\\
 v_{lt}\rightarrow v_t \quad\text{weakly in  }\quad L^{2}((0,T),\cL^2),\\
 \phi_{lt}\rightarrow \phi_t \quad\text{weakly in   }\quad L^{2}((0,T),\cH^{-1}).
 \end{cases}
  \eeq
 From $\phi\in L^2((0,T),\cH_0^1), \phi_t\in L^{2}((0,T),\cH^{-1})$ and \cite[Lemma II.3.2]{Temam} we know that
 $\phi\in C([0,T],\cL^2)$. Besides, by the following result (cf. e.g., \cite{LM})
 \begin{lemma}\label{em}
 Let $X \subset Y $ be two Hilbert spaces, and suppose that the embedding of $X$ into $Y$ is
compact.
The following continuous embedding holds:
 $\left\{ f\in L^2((0,T), X),\ f_t\in L^2((0,T), Y)\right\}$ $ \hookrightarrow C([0,T];
 [X,Y]_\frac12).$
 \end{lemma}
 \noindent and the fact that  $\cH_0^1=[\cH^2\cap \cH^1_0, \cL^2]_{\frac12}$ (cf. \cite{Temam}), we have (up to a subsequence)
 \beq
  v_l\to v\quad \text{ weakly in}\ C([0,T],\cH_0^1).\label{conve1}
 \eeq
  We infer from \cite[Lemma II.3.3]{Temam} that
  $\phi$ is weakly continuous with values in $\cH^1_0$.
  Namely, for any $\psi\in \cH^1_0$, $t\mapsto \int_\Omega \nabla \phi(t)\cdot\nabla \overline{\psi} dx$ is continuous.
  Arguing as in \cite{Temam}, we can get an equality similar to \eqref{ap2a} which holds in the distributional sense on $(0,T)$:
   \beq
 \frac{1}{2}\frac{d}{dt}\|\nabla\phi\|^2+\gamma\|\nabla\phi\|^2=\mathrm{Re}
 \int_{\Omega}\nabla h\cdot\nabla\overline{\phi}dx-\frac{g}{U}\mathrm{Im}\int_{\Omega}\nabla v\cdot\nabla\overline{\phi}dx.
 \eeq
 As a result,  $t\mapsto \|\nabla \phi(t)\|^2$ is also continuous on $[0,T]$. Since $\|\nabla\cdot\|$
 is the equivalent norm on $\cH_0^1$, we conclude that $\phi\in C([0,T],\cH^1_0)$.

 The well-known Aubin-Lions lemma implies that there is a subsequence
of $v_l$, still denoted by $v_l$ such that
 \beq v_l\rightarrow v
\quad\text{strongly in}
 \quad L^2((0,T),\cH^1_0).
 \eeq
 Hence, there is a
subsequence of $v_l$, still denoted by $v_l$ such that $v_l$ almost
everywhere converges to $v$ in $Q_T=\Omega\times[0,T]$. It turns out
that $|v_l|^2v_l$ almost everywhere converges to $|v|^2v$ in $Q_T$.
On the other hand, it follows from \eqref{b1} that $|v_l|^2v_l$ is
uniformly bounded in
 $L^{\infty}((0,T),\cL^2)$ and hence in $ L^{2}((0,T),\cL^2)$.
 Therefore, we infer that the weak limit of $|v_l|^2v_l$ in
 $L^2([0,T],\cL^2(\Omega))$ equals to $|v|^2v$:
  \beq
|v_l|^2v_l\rightarrow |v|^2v\quad\text{weakly in }\quad L^2((0,T),
\cL^2).
 \eeq
 Passing to the limit $l\to +\infty$, we can infer from the above convergence properties of $v_l, \phi_l$ that
\eqref{w1} and \eqref{w2} are satisfied.  Concerning the initial
data, we infer from \eqref{conve} that (cf. e.g.,
 \cite[Lemma 3.1.7]{Z02}) that
 \beq
 \begin{cases}
 v_l(0)=\sum_{j=1}^l\eta_{jl}\omega_j\rightarrow v(0)\quad
 \text{weakly in} \quad \cL^2(\Omega), \\
 \phi_l(0)=\sum_{j=1}^l\zeta_{jl}\omega_j\rightarrow \phi(0)\quad
 \text{weakly in } \quad \cH^{-1}(\Omega).
 \end{cases}
 \eeq
 On the other hand, we know that $(v_l(0),\phi_l(0))$ strongly converges in $\cH^1_0\times \cH^1_0$;
 hence, it also weakly converges to $(v_0,\phi_0)$ in $\cL^2\times
\cH^{-1}$. By the uniqueness of the limit, we have $ v(0)=v_0$,
$\phi(0)=\phi_0$.

Summing up, we have proved the existence of a global weak solution
$(v,\phi)$ to problem \eqref{3}-\eqref{4}. The proof is complete.
\end{proof}

Next, we show the continuous dependence result on the initial data
that yields the uniqueness of weak
 solutions to problem \eqref{3}-\eqref{4}:
 \begin{theorem}\label{cod}
 For any $(v_{01}, \phi_{01}), (v_{02}, \phi_{02})\in \cH^1_0(\Omega)\times \cH^1_0(\Omega)$, we denote
 the corresponding global weak solutions to
 problem \eqref{3}-\eqref{4} by $(v_1,\phi_1)$ and
 $(v_2,\phi_2)$, respectively. For any $T>0$, it holds
 \begin{eqnarray}
 && \|v_1(t)-v_2(t)\|_{\cH^1}^2+\|\phi_1(t)-\phi_2(t)\|_{\cH^1}^2+ \int_0^t\|v_{1t}(t)-v_{2t}(t)\|^2dt\nonumber\\
  &\leq& L_1e^{L_2t}(\|v_{01}-v_{02}\|_{\cH^1}^2+\|\phi_{01}-\phi_{02}\|_{\cH^1}^2),\quad \forall\ 0\leq t\leq T,\label{contii}
 \end{eqnarray}
 where $L_1, L_2$ are positive constants depending on $\|v_{01}\|_{\cH^1},  \|\phi_{01}\|_{\cH^1}$,
 $\|v_{02}\|_{\cH^1}, \|\phi_{02}\|_{\cH^1}$, $|\Omega|$, $f$, $h$ and coefficients of system \eqref{3}.
 \end{theorem}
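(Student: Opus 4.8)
Here is how I would approach Theorem~\ref{cod}.

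\medskip

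The plan is to derive a single differential inequality for the difference of the two solutions and close it by Gronwall's lemma, following essentially verbatim the structure of the \emph{a priori} estimates in Step~2 of the proof of Theorem~\ref{T2.1}. Set $(v,\phi):=(v_1-v_2,\phi_1-\phi_2)$. Subtracting the two copies of \eqref{3}, the external forces $f,h$ cancel, and $(v,\phi)$ solves the same system but with the cubic term $ib|v|^2v$ replaced by $ibN$, where $N:=|v_1|^2v_1-|v_2|^2v_2$, subject to the homogeneous Dirichlet condition $v|_{\Gamma}=\phi|_{\Gamma}=0$ and the initial data $v(0)=v_{01}-v_{02}$, $\phi(0)=\phi_{01}-\phi_{02}$. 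All the linear terms are handled exactly as in \eqref{ap1}--\eqref{ap2a}, so the only genuinely new point is the treatment of $N$.

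The key estimate for the cubic difference is the pointwise bound $|N|\le C(|v_1|^2+|v_2|^2)|v|$. Combined with the Sobolev embedding $\cH^1_0(\Omega)\hookrightarrow\cL^6(\Omega)$ (valid since $\Omega\subset\mathbb{R}^3$), Hölder's inequality and the uniform bound \eqref{es1} for both solutions on $[0,T]$, this yields
\begin{equation*}
\|N(t)\|\le C\big(\|v_1(t)\|_{\cH^1}^2+\|v_2(t)\|_{\cH^1}^2\big)\|v(t)\|_{\cH^1}\le C_T\,\|v(t)\|_{\cH^1}\qquad\text{for a.e. }t\in[0,T],
\end{equation*}
with $C_T$ depending only on the quantities in \eqref{es1} (in particular on $T$; alternatively one may use the uniform-in-time bound established in Section~3). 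Moreover, in the basic $\cL^2$-estimate for $v$ the nonlinear contribution is $b\,\mathrm{Re}(N,v)=b\int_\Omega\mathrm{Re}\big[(|v_1|^2v_1-|v_2|^2v_2)\overline{(v_1-v_2)}\big]\,dx\ge0$ by the monotonicity of $z\mapsto|z|^2z$ on $\mathbb{C}$ (it is the gradient of the convex function $\tfrac14|z|^4$), so it may simply be discarded; in the remaining estimates it is controlled by Young's inequality and the displayed bound on $\|N(t)\|$.

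With these observations the computation parallels that of Theorem~\ref{T2.1}: I multiply the $v$-equation of the difference system by $\overline{v}$, $\overline{v}_t$ and $-\Delta\overline{v}$, sum and take imaginary parts; I multiply the $\phi$-equation by $\overline{\phi}$ and $-\Delta\overline{\phi}$ and take real parts (the two tests involving $-\Delta$ being justified by the same Galerkin/Lions--Magenes argument used to produce \eqref{ap2a}). The linear cross terms are bounded as in \eqref{ap1}--\eqref{ap2a}, while the nonlinear remainders in the $\overline{v}_t$- and $-\Delta\overline{v}$-tests are estimated by $b|\mathrm{Re}(N,v_t)|\le\frac{d_i}{8}\|v_t\|^2+C_T\|v\|_{\cH^1}^2$ and $b|\mathrm{Re}(N,\Delta v)|\le\varepsilon\|\Delta v\|^2+C_T\|v\|_{\cH^1}^2$, with $\varepsilon$ small enough to be absorbed into the dissipative term $\frac{c}{4m}\|\Delta v\|^2$. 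Multiplying the $-\Delta\overline{v}$-estimate by a small $\kappa>0$ and adding everything (as in \eqref{gron1}--\eqref{U2}), one obtains two functionals $\Phi_1(t)\simeq\|v(t)\|_{\cH^1}^2+\|\phi(t)\|_{\cH^1}^2$ and $\Phi_2(t)\simeq\|\Delta v(t)\|^2+\|\nabla v(t)\|^2+\|v(t)\|^2+\|v_t(t)\|^2+\|\phi(t)\|_{\cH^1}^2$, all with positive coefficients thanks to (A2), (A3) and $\kappa$ small, satisfying
\begin{equation*}
\frac{d}{dt}\Phi_1(t)+\Phi_2(t)\le C_T\,\Phi_1(t),\qquad t\in(0,T).
\end{equation*}
Gronwall's lemma gives $\Phi_1(t)\le e^{C_Tt}\Phi_1(0)$, and integrating the inequality in time gives in addition $\int_0^t\Phi_2(s)\,ds\le e^{C_Tt}\Phi_1(0)$; since $\Phi_1(0)\le C(\|v_{01}-v_{02}\|_{\cH^1}^2+\|\phi_{01}-\phi_{02}\|_{\cH^1}^2)$ and $\Phi_2$ dominates $\|v_t\|^2$, the estimate \eqref{contii} follows, with $L_1,L_2$ of the stated dependence. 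Taking $(v_{01},\phi_{01})=(v_{02},\phi_{02})$ gives uniqueness of weak solutions.

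I expect the main obstacle to be purely organizational rather than conceptual: one has to choose the weight $\kappa$ together with all the Young constants so that every coefficient appearing in $\Phi_1$ and $\Phi_2$ remains strictly positive and so that the dissipation $\|\Delta v\|^2$, $\|v_t\|^2$ genuinely absorbs the cross term $d_r\,\mathrm{Im}(v_t,\Delta v)$ and the nonlinear remainders simultaneously — the same balancing already carried out in \eqref{gron1}--\eqref{U2}. A secondary point deserving a word of care is the rigorous justification of the $-\Delta\overline{\phi}$ test for the Schr\"{o}dinger-type equation, which, as in Theorem~\ref{T2.1}, is performed at the Galerkin level (or via \cite[Lemma II.3.3]{Temam}) rather than directly on the limit functions, since $\phi_t$ only lies in $L^2((0,T),\cH^{-1})$.
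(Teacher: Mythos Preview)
Your proposal is correct, and the nonlinear estimate $\|N\|\le C(\|v_1\|_{\cH^1}^2+\|v_2\|_{\cH^1}^2)\|v\|_{\cH^1}$ via $\cH^1_0\hookrightarrow\cL^6$ is exactly what the paper uses. However, your route is more elaborate than necessary. The paper tests the $v$-difference equation \emph{only} with $\overline{v}_t$: because the linear terms $i(\tfrac1U-a)v$ and $-\tfrac{ic}{4m}\Delta v$ are purely imaginary multiples of $v$ and $\Delta v$, taking the imaginary part of $(v\text{-eqn},\overline{v}_t)$ already produces the full time derivative $\frac{d}{dt}\big[\tfrac12(\tfrac1U-a)\|v\|^2+\tfrac{c}{8m}\|\nabla v\|^2\big]$ together with the dissipation $d_i\|v_t\|^2$ (see \eqref{qi1}). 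For $\phi$, the paper tests with $\overline{\phi}-\Delta\overline{\phi}$ in one shot. Thus only \emph{two} multipliers are needed, and no $-\Delta\overline{v}$ test, no $\kappa$-balancing, and no monotonicity argument for $b\,\mathrm{Re}(N,v)$ ever enter; the cross term $d_r\,\mathrm{Im}(v_t,\Delta\overline{v})$ you worry about simply does not appear. Your five-multiplier scheme, modelled on the existence proof, also closes and additionally yields $\int_0^t\|\Delta v\|^2$ (not asserted in \eqref{contii}), at the cost of the extra bookkeeping you flag as the ``main obstacle''. Either argument is fine; the paper's is shorter.
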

 \begin{proof}
 We shall just perform formal computations
that can be justified within the same Galerkin scheme used above.
Let $v=v_1-v_2$, $\phi=\phi_1-\phi_2$, $ v(0)=v_{01}-v_{02}$,
$\phi(0)=\phi_{01}-\phi_{02}$. Then the differences $(v,\phi)$
satisfy a.e. in $[0,T]$ that
 \beq
  \label{u1} d v_t +i\left(\frac{1}{U}-a\right)v -\frac{i
 g}{U} \phi-\frac{ic}{4m}\Delta v+ib(|v_1|^2v_1-|v_2|^2v_2)=0,
 \eeq
 \beq \label{u2}
 \phi_t-\frac{i g}{U}v+i\left(\frac{g^2}{U}+2\nu-2\mu\right)\phi-\frac{i}{4m}\Delta\phi +\gamma\phi=0.
 \eeq
 Multiplying \eqref{u1} by $\overline{v}_t$, integrating over $\Omega$ and taking the imaginary part of the result, we have
 \begin{eqnarray}
 &&\frac{d}{dt}\left[\frac{1}{2}\left(\frac{1}{U}-a\right)\|v\|^2+\frac{c}{8m}\|\nabla v\|^2\right]+ d_i\|v_t\|^2\nonumber\\
 &=&\frac{g}{U}\mathrm{Re}\int_{\Omega}\phi\overline{v}_tdx-b\mathrm{Re}\int_\Omega(|v_1|^2v_1-|v_2|^2v_2)\overline{v}_tdx\nonumber\\
 &\leq&\frac{d_i}{2}\|v_t\|^2+C\|\phi\|^2+C\||v_1|^2v_1-|v_2|^2v_2\|^2\nonumber\\
 &\leq&\frac{d_i}{2}\|v_t\|^2+C\|\phi\|^2+C\left[\|v_1\|_{\cL^6}^2\|v\|_{\cL^6}+ \|v_2\|_{\cL^6}(\|v_1\|_{\cL^6}
 +\|v_2\|_{\cL^6})\|v\|_{\cL^6}\right]^2\nonumber\\
 &\leq& \frac{d_i}{2}\|v_t\|^2+C(\|\phi\|^2+\|v\|_{\cH^1}^2).\label{qi1}
\end{eqnarray}
In above, we have used the uniform-in-time estimate \eqref{uniH1}
instead of \eqref{es1}.  Multiplying \eqref{u2} by
$\overline{\phi}-\Delta \overline{\phi}$, integrating over $\Omega$
and taking the real part, we get
 \begin{eqnarray}
 && \frac{1}{2}\frac{d}{dt}(\|\phi\|^2+\|\nabla\phi\|^2)+\gamma(\|\phi\|^2+\|\nabla\phi\|^2)
 =-\frac{g}{U}\mathrm{Im}\int_{\Omega}(v\overline{\phi}+\nabla
 v\cdot\nabla\overline{\phi})dx\nonumber\\
 &\leq&\frac{\gamma}{2}(\|\phi\|^2+\|\nabla\phi\|^2)+C(\|v\|^2+\|\nabla v\|^2).\label{qi2}
 \end{eqnarray}
 Adding the above estimates together, we have
 \begin{eqnarray}
 &&\frac{d}{dt}\left[\frac{1}{2}\left(\frac{1}{U}-a\right)\|v\|^2+\frac{c}{8m}\|\nabla v\|^2+\frac12\|\phi\|_{\cH^1}^2\right]
 + \frac{d_i}{2}\|v_t\|^2+\frac{\gamma}{2}\|\phi\|_{\cH^1}^2\nonumber\\
 &\leq& C(\|\phi\|_{\cH^1}^2+\|v\|_{\cH^1}^2).\label{ggg}
 \end{eqnarray}
 Then our conclusion \eqref{contii} easily follows from \eqref{ggg} and the standard Gronwall lemma. The proof is complete.
\end{proof}

\begin{corollary}\label{uniw}
Under the assumptions of Theorem \ref{T2.1}, the global weak
solution $(v,\phi)$ to problem \eqref{3}-\eqref{4} is unique.
\end{corollary}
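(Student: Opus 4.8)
The plan is to read off the uniqueness statement as an immediate corollary of the continuous dependence estimate \eqref{contii} in Theorem \ref{cod}. Concretely, suppose $(v_1,\phi_1)$ and $(v_2,\phi_2)$ are two global weak solutions of problem \eqref{3}-\eqref{4} in the sense of the Definition, both issuing from the \emph{same} initial datum $(v_0,\phi_0)\in\cH^1_0(\Omega)\times\cH^1_0(\Omega)$. Fix an arbitrary $T>0$. Applying Theorem \ref{cod} with the particular choice $(v_{01},\phi_{01})=(v_{02},\phi_{02})=(v_0,\phi_0)$, the quantity $\|v_{01}-v_{02}\|_{\cH^1}^2+\|\phi_{01}-\phi_{02}\|_{\cH^1}^2$ on the right-hand side of \eqref{contii} vanishes, so that
\[
\|v_1(t)-v_2(t)\|_{\cH^1}^2+\|\phi_1(t)-\phi_2(t)\|_{\cH^1}^2+\int_0^t\|v_{1t}(s)-v_{2t}(s)\|^2\,ds\le 0,\qquad \forall\,0\le t\le T.
\]
Since every term on the left-hand side is nonnegative, each must be zero; in particular $v_1(t)=v_2(t)$ and $\phi_1(t)=\phi_2(t)$ in $\cH^1_0(\Omega)$ for every $t\in[0,T]$. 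Because $T>0$ was arbitrary, the two weak solutions coincide on all of $\Omega\times\mathbb{R}_+$, which is exactly the assertion of Corollary \ref{uniw}.

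The only point that deserves a word of care — and it is not really an obstacle — is that Theorem \ref{cod} was established by \textquotedblleft formal\textquotedblright\ energy computations (multiplying the difference equation \eqref{u1} by $\overline v_t$ and \eqref{u2} by $\overline\phi-\Delta\overline\phi$) that must be justified for the class of regularity enjoyed by weak solutions, namely $v,\phi\in C([0,T],\cH^1_0(\Omega))$, $v_t\in L^2((0,T),\cL^2(\Omega))$ and $\phi_t\in L^2((0,T),\cH^{-1}(\Omega))$. This is precisely why the difference estimate is carried out \textquotedblleft within the same Galerkin scheme\textquotedblright: one writes the identities \eqref{qi1}-\eqref{ggg} for the finite-dimensional approximations and passes to the limit using the convergences of Theorem \ref{T2.1}, the uniform-in-time $\cH^1$-bound, and the lower semicontinuity of the norms; no new ingredient beyond what is already in the excerpt is needed. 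With Theorem \ref{cod} in hand, there is genuinely nothing left to do for the corollary beyond the one-line specialization above.
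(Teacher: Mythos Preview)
Your proof is correct and matches the paper's intent: the corollary is stated without proof precisely because it follows by specializing Theorem~\ref{cod} to equal initial data, exactly as you do. One small caveat on your second paragraph: the Galerkin justification you describe would, strictly speaking, only yield \eqref{contii} for solutions arising as Galerkin limits, whereas uniqueness requires the estimate for \emph{arbitrary} weak solutions; the cleaner route is to observe that the regularity in the definition of weak solution (in particular $v\in L^2(0,T;\cH^2)$, $v_t\in L^2(0,T;\cL^2)$, $\phi\in C([0,T];\cH^1_0)$, $\phi_t\in L^2(0,T;\cH^{-1})$) is already enough to justify the pairings in \eqref{qi1}--\eqref{qi2} directly, which is what the paper's phrase ``justified \ldots\ by the standard dense argument'' is hinting at.
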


The above results imply that the unique global weak solution to
problem \eqref{3}-\eqref{4} defines a strongly continuous nonlinear
semigroup $S(t)$ acting on $\cH_0^1(\Omega)\times \cH_0^1(\Omega)$,
such that $ (v(t),\phi(t))=S(t)(v_0,\phi_0). $

\section{Existence of the Global Attractor}
\setcounter{equation}{0}

 In this section, we study the existence of a global
attractor to problem \eqref{3}-\eqref{4}.
 For this purpose, we will show the existence of an absorbing
set and some precompactness of the weak solution  $(v, \phi)$. In
the remaining part of the paper, we shall exploit some formal
\emph{a priori} estimates, which can be justified rigorously by the
approximate procedure in the previous section and the standard dense
argument.

\begin{proposition}
 \label{P3.1}
 Let assumptions (A1)-(A3) be satisfied.
There exists a positive constant $R_0$ such that the ball
$$\mathcal{B}_0=\{(v,\phi)\in \cH_0^1(\Omega)\times \cH_0^1(\Omega)\ |\ \|v\|^2_{\cH^1}+\|\phi\|^2_{\cH^1}\leq
R_0\}$$ is a bounded absorbing set for the dynamical system $S(t)$
associated with problem \eqref{3}-\eqref{4}. Namely, for any bounded
set $\mathcal{B}\subset \cH_0^1(\Omega)\times \cH_0^1(\Omega)$,
there is $t_0 =t_0(\mathcal{B})$ such that $S(t)\mathcal{B} \subset
\mathcal{B}_0$ for every $t \geq t_0$.
 \end{proposition}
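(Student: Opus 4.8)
The plan is to revisit the \emph{a priori} estimates from the proof of Theorem~\ref{T2.1}, but now to keep all constants explicit so as to obtain a \emph{dissipative} differential inequality $\frac{d}{dt}\Upsilon_1(t)+\delta\,\Upsilon_1(t)\le C$ with some $\delta>0$, instead of the Gronwall-growing one used there. All computations are carried out formally on $(v,\phi)$ and justified by the Galerkin scheme of Section~2. Concretely I would test the first equation of \eqref{3} against $\bar v$, against $\bar v_t$ and against $-\Delta\bar v$ and take imaginary parts (the analogues of \eqref{ap1}, \eqref{ap2}, \eqref{ap3}), and test the second equation against $\bar\phi$ and against $-\Delta\bar\phi$ and take real parts (the analogues of \eqref{ap1a}, \eqref{ap2a}). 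The coupling term $-d_r\,\mathrm{Im}\int_\Omega v_t\bar v$ in the first identity is absorbed, exactly as in Section~2, by the dissipation $d_i\|v_t\|^2$ furnished by the $\bar v_t$-test (taking the weight $\kappa$ of the $-\Delta\bar v$-test small as in \eqref{U2}); this is why $\|v_t\|^2$ and the $\cL^4$-term must enter the Lyapunov functional, whose structure is therefore the same as in \eqref{U1}--\eqref{U2}.

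The only genuinely new point, and the hard part, is to absorb the coupling and lower-order contributions on the right-hand side --- the quantities $C\|v\|_{\cH^1}^2$ and $C\|\phi\|^2$ appearing in \eqref{gron1} --- into the dissipation rather than into $\Upsilon_1$, and to do so under assumptions (A2)--(A3) only, with no smallness of $g$ nor largeness of $\gamma$ (which is where we improve on \cite{FJG09}). Two observations make this possible. First, since $\Omega\subset\mathbb R^3$ is bounded, Hölder's and Young's inequalities give $C\|v\|^2\le C|\Omega|^{1/2}\|v\|_{\cL^4}^2\le \tfrac b4\|v\|_{\cL^4}^4+C'|\Omega|$, while $C\|\nabla v\|^2\le C\|v\|\,\|\Delta v\|\le \varepsilon\|\Delta v\|^2+C_\varepsilon\|v\|^2$ with the first term soaked up by the $\kappa\|\Delta v\|^2$-dissipation of the $-\Delta\bar v$-test and the second again pushed into the $\cL^4$-term; equivalently, one may integrate by parts in the $\nabla v\!\cdot\!\nabla\bar\phi$ coupling term of \eqref{ap2a} to rewrite it as $\int\Delta v\,\bar\phi$ and avoid $\|\nabla v\|^2$ on the right altogether. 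In either case every lower-order $v$-contribution is absorbed up to an \emph{additive constant} depending only on $|\Omega|$, $\|f\|$, $\|h\|_{\cH^1}$ and the coefficients. Second, I would weight the two $\phi$-estimates by a large constant $M$ before summing everything: the damping then contributes $\tfrac{M\gamma}{2}\|\phi\|_{\cH^1}^2$, which for $M$ large dominates the $C\|\phi\|^2$ fed in from the $v$-equation, while the extra $M$-dependent coupling terms it creates (bounded by $\lesssim M\|v\|^2+M\|\nabla v\|^2$) are themselves disposed of by the first observation, at the price of an additive constant that now also depends on the (fixed) number $M$.

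Putting this together one arrives at $\frac{d}{dt}\Upsilon_1(t)+\delta\,\Upsilon_1(t)\le C_4$ with $\Upsilon_1$ equivalent to $\|v\|_{\cH^1}^2+\|v\|_{\cL^4}^4+\|\phi\|_{\cH^1}^2$ (all coefficients positive by (A2)--(A3), since $aU<1$, $d_i>0$, $b,c,m,\gamma>0$) and $C_4=C_4(|\Omega|,\|f\|,\|h\|_{\cH^1},\text{coefficients})$. The Gronwall lemma then yields $\Upsilon_1(t)\le \Upsilon_1(0)e^{-\delta t}+C_4/\delta$ for all $t\ge0$. Given a bounded set $\mathcal B\subset\cH_0^1(\Omega)\times\cH_0^1(\Omega)$, the embedding $\cH_0^1\hookrightarrow\cL^4$ gives $\Upsilon_1(0)\le\rho(\mathcal B)<\infty$, so there is $t_0=t_0(\mathcal B)$ with $\Upsilon_1(t)\le 2C_4/\delta$ for all $t\ge t_0$; since $\Upsilon_1\ge c_*\big(\|v\|_{\cH^1}^2+\|\phi\|_{\cH^1}^2\big)$, this is exactly $S(t)\mathcal B\subset\mathcal B_0$ for $t\ge t_0$ with $R_0:=2C_4/(c_*\delta)$. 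The crucial difficulty, as the Introduction anticipates, is that the $\phi$-equation has no smoothing and only the weak damping $\gamma\phi$, so the $v$--$\phi$ coupling cannot be closed by a plain energy balance; the mechanism that resolves it here is to use the coercivity of the cubic nonlinearity to convert all lower-order $v$-terms into harmless constants, after which a large weight on the $\phi$-equations makes the damping sufficient regardless of its strength.
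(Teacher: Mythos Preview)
Your argument is correct and reaches the same dissipative inequality $\frac{d}{dt}E+\delta E\le C$ as the paper, but you take a longer route. The paper does \emph{not} use the $-\Delta\bar v$ test here at all: it works only with the four estimates obtained by testing the $v$-equation against $\bar v$ and $\bar v_t$ and the $\phi$-equation against $\bar\phi$ and $-\Delta\bar\phi$ (the analogues of \eqref{abo1}--\eqref{abo4}). The dangerous $\|\nabla v\|^2$ on the right, which arises only from the $\nabla v\cdot\nabla\bar\phi$ coupling in the $-\Delta\bar\phi$ test, is absorbed directly into the $\frac{c}{4m}\|\nabla v\|^2$ dissipation produced by the $\bar v$ test, simply by putting a \emph{small} weight $\kappa_3$ on the $-\Delta\bar\phi$ estimate and a suitable weight $\kappa_1$ on the $\bar v$ estimate; then a large weight $\kappa_2$ on the $\bar\phi$ estimate makes the $\phi$-damping swallow the $\|\phi\|^2$ terms fed in from the $v$-side. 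Only the leftover $C\|v\|^2$ (now carrying the large $\kappa_2$) is converted to a constant via $\|v\|^2\le \kappa_4\|v\|_{\cL^4}^4+|\Omega|/(4\kappa_4)$, exactly as you do. Your approach instead manufactures $\|\Delta v\|^2$ dissipation from the extra $-\Delta\bar v$ test and interpolates $\|\nabla v\|^2\le \varepsilon\|\Delta v\|^2+C_\varepsilon\|v\|^2$; this works, but the interpolation step and the additional care needed with the small parameter $\kappa$ (so that the $\|v_t\|^2$ it creates is still absorbed) are avoidable. In short: both proofs rely on the cubic term to turn $\|v\|^2$ into a harmless constant, but the paper handles $\|\nabla v\|^2$ by weight-balancing rather than by interpolation against a higher-order norm, which makes the bookkeeping shorter.
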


\begin{proof}
Within the proof, we denote by $C_j\  (j=1,2,...)$ positive
constants that may depend on the coefficients of the system
\eqref{3}, $\Omega$, but not on the initial data $v_0, \phi_0$ and
time. Multiplying the first equation in \eqref{3} by $\overline v$
and $\overline v_t$, respectively, integrating over $\Omega$ and
taking the imaginary part of the results, we have
 \begin{eqnarray}
&& \frac{d_i}{2}\frac{d}{dt}\|v\|^2+\left(\frac{1}{U}-a\right)\|v\|^2+\frac{c}{4m}\|\nabla
v\|^2+ b\int_{\Omega}|v|^4dx
\nonumber\\
&=& \frac{g}{U}\mathrm{Re}\int_{\Omega}\phi\overline{v}dx-d_r
\mathrm{Im}\int_{\Omega}v_t\overline{v}dx+\mathrm{Im}\int_{\Omega}f\overline{v}dx \nonumber\\
&\leq&
\frac{1}{2}\left(\frac{1}{U}-a\right)\|v\|^2+C_1(\|\phi\|^2+\|v_t\|^2+\|f\|^2),\label{abo1}
 \end{eqnarray}
 \begin{eqnarray}
&& \frac{d}{dt}\left[\frac{1}{2}\left(\frac{1}{U}-a\right)\|v\|^2+\frac{c}{8m}\|\nabla v\|^2
+\frac{b}{4}\int_{\Omega}|v|^4dx\right]+d_i\|v_t\|^2\nonumber\\
&=&\frac{g}{U}\mathrm{Re}\int_{\Omega}\phi\overline{v}_tdx+\mathrm{Im}\int_{\Omega}f\overline{v}_tdx\nonumber\\
&\leq& \frac{d_i}{2}\|v_t\|^2+C_2(\|\phi\|^2+\|f\|^2).\label{abo2}
 \end{eqnarray}
Multiplying the second equation in \eqref{3} by $\overline \phi$ and
$-\Delta \overline\phi$, respectively, integrating over $\Omega$ and
taking the real part, we get
 \begin{eqnarray}
\frac{1}{2}\frac{d}{dt}\|\phi\|^2+\gamma\|\phi\|^2&=&
\mathrm{Re}\int_{\Omega}h\overline{\phi}dx-\frac{g}{U}\mathrm{Im}\int_{\Omega}v\overline{\phi}dx\nonumber\\
&\leq& \frac{\gamma}{2}\|\phi\|^2+C_3(\|v\|^2+\|h\|^2),\label{abo3}
 \end{eqnarray}
 \begin{eqnarray}
 \frac{1}{2}\frac{d}{dt}\|\nabla\phi\|^2+\gamma\|\nabla\phi\|^2
 &=&\mathrm{Re}\int_{\Omega}\nabla h\cdot\nabla\overline{\phi}dx
 -\frac{g}{U}\mathrm{Im}\int_{\Omega}\nabla v\cdot\nabla\overline{\phi}dx\nonumber\\
&\leq&\frac{\gamma}{2}\|\nabla \phi\|^2+C_4(\|\nabla v\|^2+\|\nabla
h\|^2).\label{abo4}
 \end{eqnarray}
Now multiplying \eqref{abo1} by $\kappa_1>0$, \eqref{abo3} by
$\kappa_2>0$ and \eqref{abo4} by  $\kappa_3>0$, adding together the
resulting inequalities with \eqref{abo2}, we obtain that
 \begin{eqnarray}
 && \frac{d}{dt}\left[\frac12\left(\kappa_1d_i+\frac1U-a\right)\|v\|^2+\frac{c}{8m}\|\nabla
 v\|^2+\frac{b}{4}\int_\Omega|v|^4dx+\frac{\kappa_2}{2}\|\phi\|^2+\frac{\kappa_3}{2}\|\nabla\phi\|^2\right]\nonumber\\
 && +
 \frac{\kappa_1}{2}\left(\frac{1}{U}-a\right)\|v\|^2+\left(\frac{c\kappa_1}{4m}-C_4\kappa_3\right)\|\nabla
 v\|^2+\kappa_1b\int_\Omega |v|^4dx\nonumber\\
 &&
 +\left(\frac{\gamma\kappa_2}{2}-C_1\kappa_1-C_2\right)\|\phi\|^2
 +\frac{\gamma\kappa_3}{2}\|\nabla \phi\|^2+\left(\frac{d_i}{2}-C_1\kappa_1\right)\|v_t\|^2\nonumber\\
&\leq& C_3\kappa_2\|v\|^2+(C_1\kappa_1+C_2)\|f\|^2+
C_3\kappa_2\|h\|^2+C_4\kappa_3\|\nabla h\|^2.\label{ABO}
 \end{eqnarray}
By the Young inequality, we have for some $\kappa_4>0$,
 \beq \|v\|^2 \leq
 \kappa_4\|v\|_{\cL^4}^4+\frac{|\Omega|}{4\kappa_4}.\label{Yo}
 \eeq
Take
 \beq
 \kappa_1
 =\frac{d_i}{4C_1},\quad \kappa_2=\frac{4C_2+d_i}{\gamma},\quad \kappa_3=\frac{cd_i}{32mC_1C_4},\quad \kappa_4=\frac{\gamma b
 d_i}{8C_1C_3(4C_2+d_i)}.
 \eeq
 We infer from \eqref{ABO} that the following inequality holds
 \beq
  \frac{d}{dt}E_1(t)+ C_5 E_1(t)+\frac{d_i}{4}\|v_t\|^2\leq C_6,\label{ABO1}
 \eeq
 where
\beq
E_1(t)=\frac12\left(\kappa_1d_i+\frac1U-a\right)\|v(t)\|^2+\frac{c}{8m}\|\nabla
 v(t)\|^2+\frac{b}{4}\int_\Omega|v(t)|^4dx+\frac{\kappa_2}{2}\|\phi(t)\|^2+\frac{\kappa_3}{2}\|\nabla\phi(t)\|^2.\nonumber
 \eeq
 Then \eqref{ABO1} yields that
 \beq  E_1(t)\leq e^{-C_5t}E_1(0)+\frac{C_6}{C_5},\quad \forall t\geq 0.\label{ABO2}\eeq
On the other hand, let
$$ E_2(t)=\|v(t)\|_{\cH^1}^2+\|v(t)\|_{\cL^4}^4+\|\phi(t)\|_{\cH^1}^2.$$
 It is easy to see that there exist $C_7, C_8,C_9>0$ such that for
 all $t\geq 0$,
 \beq
 \begin{cases} C_7 E_2(t)\leq E_1(t)\leq C_8E_2(t),\\
 \|v(t)\|_{\cH^1}^2+\|\phi(t)\|_{\cH^1}^2\leq E_2(t)\leq
 \|v(t)\|_{\cH^1}^2+C_9\|v(t)\|_{\cH^1}^4+\|\phi(t)\|_{\cH^1}^2.
 \end{cases}
 \eeq
 This and \eqref{ABO2} imply that
 \beq \|v(t)\|_{\cH^1}^2+\|\phi(t)\|_{\cH^1}^2\leq E_2(t)\leq \frac{C_8}{C_7}
 e^{-C_5t} (\|v_0\|_{\cH^1}^2+\|v_0\|_{\cH^1}^4+\|\phi_0\|_{\cH^1}^2)+\frac{C_6}{C_5C_7},\quad \forall t\geq
 0.\label{uniH1}
 \eeq
 Finally, we can take $R_0=\frac{2C_6}{C_5C_7}$. The proof is complete.
 \end{proof}

 \begin{remark}\label{R3.1}
 Proposition \ref{P3.1} implies that the trajectories $(v(t), \phi(t))$ starting from any bounded set $\mathcal{B}$
 will eventually enter the ball $\mathcal{B}_0$ in $\cH_0^1\times \cH_0^1$ of radius
 $(R_0)^\frac12$ uniformly in time. Noticing that, $\mathcal{B}_0\subset \tilde{\mathcal{B}}_0:=\bigcup_{t\geq
0}S(t)\mathcal{B}_0$, we can see that $\tilde{\mathcal{B}}_0$ also
serves as an absorbing set of $S(t)$. Moreover,
$\tilde{\mathcal{B}}_0$ is invariant under $S(t)$ for $t\geq 0$.
 \end{remark}
 Our next goal is to study the precompactness of the weak
solution $(v, \phi)$ of problem \eqref{3}-\eqref{4}.

\begin{lemma}
\label{P4.1}
 Under assumptions of Theorem \ref{T2.1}, the following
uniform estimate holds:
 \beq\|v(t)\|_{\cH^2(\Omega)}\leq C\left(1+\frac{1}{r}\right), \quad \forall \ t\geq r>0,
 \eeq
 where $C$ is a constant depending on $\|v_0\|_{\cH^1}$,
 $\|\phi_0\|_{\cH^1}$, $\Omega$, $f,h$ and the coefficients of the
 system \eqref{3}, but independent of $t$.
\end{lemma}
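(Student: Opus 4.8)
The plan is to derive a closed differential inequality for the quantity
\[
\Psi(t):=\frac12\Big(\frac1U-a\Big)\|\nabla v(t)\|^2+\frac{c}{8m}\|\Delta v(t)\|^2
\]
and then apply the uniform Gronwall lemma (e.g.\ \cite[Lemma III.1.1]{Temam}); as remarked above, all computations are formal but are justified within the Galerkin scheme of Section 2. First I collect the ingredients already at hand. By \eqref{uniH1} there is a uniform bound $\|v(t)\|_{\cH^1}^2+\|\phi(t)\|_{\cH^1}^2\le\rho_1^2$ for all $t\ge0$, with $\rho_1$ depending only on $\|v_0\|_{\cH^1},\|\phi_0\|_{\cH^1},\Omega,f,h$ and the coefficients; moreover, by the estimates in the proof of Proposition \ref{P3.1}, the functional $E_1$ of \eqref{ABO1} satisfies $E_1(t)\le C(\rho_1^2+\rho_1^4)$ uniformly in $t$. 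Integrating \eqref{ABO1} over $[t,t+1]$ and discarding the nonnegative terms gives $\int_t^{t+1}\|v_t(s)\|^2\,ds\le M_2$ for every $t\ge0$. Redoing, in the rigorous Galerkin form, the computation that led to \eqref{ap3} (multiply the first equation of \eqref{3} by $-\Delta\overline{v}$, take the imaginary part, absorb the $\Delta v$ terms arising on the right, and discard the nonnegative terms on the left) yields $\frac{d_i}{2}\frac{d}{dt}\|\nabla v\|^2+\frac{c}{8m}\|\Delta v\|^2\le C(\|f\|^2+\|\phi\|^2+\|v_t\|^2)$; integrating this over $[t,t+r]$ for $0<r\le1$ and using the previous bound gives $\int_t^{t+r}\|\Delta v(s)\|^2\,ds\le M_4$, hence $\int_t^{t+r}\Psi(s)\,ds\le M_5$, all uniformly in $t\ge0$.

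The key new estimate comes from multiplying the first equation of \eqref{3} by $-\Delta\overline{v}_t$, integrating over $\Omega$, and taking the imaginary part. Using $(v,-\Delta v_t)=(\nabla v,\nabla v_t)$, $(\phi,-\Delta v_t)=(\nabla\phi,\nabla v_t)$, $(\Delta v,-\Delta v_t)=-(\Delta v,\Delta v_t)$, integrating the cubic term by parts as $(\nabla(|v|^2v),\nabla v_t)$ (legitimate at the Galerkin level), and recalling $\frac1U-a>0$ by (A2), one is led to
\[
\frac{d}{dt}\Psi(t)+d_i\|\nabla v_t\|^2=\frac{g}{U}\mathrm{Re}(\nabla\phi,\nabla v_t)-b\,\mathrm{Re}(\nabla(|v|^2v),\nabla v_t)+\mathrm{Im}(\nabla f,\nabla v_t).
\]
The $\nabla\phi$ and $\nabla f$ terms are bounded by $\tfrac{d_i}{6}\|\nabla v_t\|^2+C(\|\nabla\phi\|^2+\|\nabla f\|^2)$, with $\|\nabla\phi\|\le\rho_1$ and $\|\nabla f\|$ finite by (A1). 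For the cubic term one has $|\nabla(|v|^2v)|\le 3|v|^2|\nabla v|$, so by H\"older's inequality, the embedding $\cH^1(\Omega)\hookrightarrow\cL^6(\Omega)$ in dimension three, and the elliptic estimate $\|\nabla v\|_{\cL^6}\le C\|v\|_{\cH^2}\le C\|\Delta v\|$,
\[
\|\nabla(|v|^2v)\|^2\le 9\int_\Omega|v|^4|\nabla v|^2\,dx\le 9\|v\|_{\cL^6}^4\|\nabla v\|_{\cL^6}^2\le C\|v\|_{\cH^1}^4\|\Delta v\|^2\le C\rho_1^4\|\Delta v\|^2,
\]
hence $|b\,\mathrm{Re}(\nabla(|v|^2v),\nabla v_t)|\le\tfrac{d_i}{6}\|\nabla v_t\|^2+C\rho_1^4\|\Delta v\|^2$. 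Inserting these bounds, absorbing the $\|\nabla v_t\|^2$ terms into the left-hand side, and using $\|\Delta v\|^2\le\tfrac{8m}{c}\Psi$, I obtain
\[
\frac{d}{dt}\Psi(t)\le C_1\Psi(t)+C_2,
\]
with $C_1$ depending on $\rho_1$ and $C_2$ on $\rho_1$ and $\|f\|_{\cH^1}$.

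To conclude, I apply the uniform Gronwall lemma to this inequality on a window of length $r\in(0,1]$, using $\int_t^{t+r}\Psi(s)\,ds\le M_5$; this gives $\Psi(t)\le e^{C_1}\big(\tfrac{M_5}{r}+C_2\big)$ for all $t\ge r$, and the case $r\ge1$ follows from the case $r=1$. Since $\|v(t)\|_{\cH^2}^2\le C\|\Delta v(t)\|^2\le\tfrac{8m}{c}C\,\Psi(t)$ by elliptic regularity, this yields $\|v(t)\|_{\cH^2}\le C\big(1+\tfrac1r\big)$ for all $t\ge r>0$, as claimed.

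I expect the only real difficulty to be the estimate of the cubic term: it must be split so that the $|v|^4$ factor is controlled purely by the \emph{uniform} $\cH^1$ bound on $v$ (via $\cL^6$) while at most one power of $\|\Delta v\|^2$ is produced on the right, which in three space dimensions is precisely what $\cH^1\hookrightarrow\cL^6$ affords; a cruder splitting would yield a superlinear dependence on $\|\Delta v\|$ that the uniform Gronwall scheme cannot handle. Everything else — the two preliminary integrated estimates and the final application of the uniform Gronwall lemma — is routine.
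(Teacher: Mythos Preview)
Your proof is correct and follows essentially the same route as the paper's own argument: both derive a differential inequality for $\Psi(t)=\frac12(\frac1U-a)\|\nabla v\|^2+\frac{c}{8m}\|\Delta v\|^2$ by multiplying the $v$-equation with $-\Delta\overline{v}_t$ and taking the imaginary part, after first securing the integrated bounds $\int_t^{t+r}\|v_t\|^2\,ds\le C$ and $\int_t^{t+r}\|\Delta v\|^2\,ds\le C$ from \eqref{ABO1} and the test with $-\Delta\overline{v}$, and then apply the uniform Gronwall lemma. The only cosmetic differences are that the paper estimates the cubic term via the three-dimensional Agmon inequality $\|v\|_{\cL^\infty}^2\le c\|\nabla v\|\|\Delta v\|$ rather than your H\"older/$\cL^6$ splitting (both yield the same control $\int|v|^4|\nabla v|^2\le C\|v\|_{\cH^1}^4\|\Delta v\|^2$), and that the paper keeps the Gronwall coefficient $h_1(t)=\|v(t)\|_{\cH^1}^4$ time-dependent whereas you absorb it into a constant via the uniform bound \eqref{uniH1}.
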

\begin{proof}
 For any $t\geq 0$ and $r>0$, integrating \eqref{ABO1} from $t$ to $t+r$, we
 infer from \eqref{uniH1} that
  \beq \label{int2}
   \int_t^{t+r}(\|v(\tau)\|_{\cH^1}^2+\|v(\tau)\|_{\cL^4}^4+\|\phi(\tau)\|_{\cH^1}^2+\|v_t(\tau)\|^2)d\tau\leq
   C.
   \eeq
Multiplying the first equation in \eqref{3} by $-\Delta \overline
v$, integrating over $\Omega$ and taking the imaginary part, we have
  \begin{eqnarray}
  && \frac{d_i}{2}\frac{d}{dt}\|\nabla v\|^2+
  \left(\frac{1}{U}-a\right)\|\nabla v\|^2+\frac{c}{4m}\|\Delta v\|^2+2b\int_\Omega|v|^2|\nabla v|^2dx\nonumber\\
  &=&-\frac{g}{U}\mathrm{Re}\int_\Omega\phi\Delta\overline{v}dx-b\mathrm{Re}\int_\Omega\nabla\overline{v}\cdot
  \nabla\overline{v}v^2dx+d_r\mathrm{Im}\int_\Omega v_t\Delta\overline{v}dx-\mathrm{Im}\int_\Omega f\Delta\overline{v}dx\nonumber\\
 &\leq&\frac{c}{8m}\|\Delta v\|^2+b\int_\Omega|v|^2|\nabla
 v|^2dx+C(\|v_t\|^2+\|\phi\|^2+\|f\|^2).
  \end{eqnarray}
  Integrating the above inequality from $t$ to $t+r$, we infer from  \eqref{int2} that
   \beq \int_t^{t+r}\|v(\tau)\|_{\cH^2}^2d\tau\leq C.\label{bdiH2}
   \eeq
   Next, multiplying the first equation in \eqref{3} by $-\Delta \overline
v_t$, integrating over $\Omega$ and taking the imaginary part, we
get
 \begin{eqnarray}
 && \frac{d}{dt}\left[\frac{1}{2}\left(\frac{1}{U}-a\right)\|\nabla v\|^2+\frac{c}{8m}\|\Delta v\|^2\right]+d_i\|\nabla v_t\|^2\nonumber\\
 &=&\frac{g}{U}\mathrm{Re}\int_\Omega\nabla \phi\nabla\overline{v}_tdx
 -b\mathrm{Re}\int_\Omega \nabla (|v|^2v)\cdot\nabla\overline{v}_tdx +\mathrm{Im}
 \int_\Omega \nabla f\cdot\nabla\overline{v}_tdx\nonumber \\
 &\leq&\frac{d_i}{2}\|\nabla v_t\|^2+C\left(\|\nabla \phi\|^2
 +\|\nabla f\|^2+\int_\Omega|v|^4|\nabla v|^2dx\right)\nonumber\\
 &\leq&\frac{d_i}{2}\|\nabla v_t\|^2+C(\|\nabla \phi\|^2+\|\nabla f\|^2+\|v\|^4_{\cL^{\infty}}\|v\|_{\cH^1}^2)\nonumber\\
 &\leq&\frac{d_i}{2}\|\nabla v_t\|^2+C(\|\nabla \phi\|^2+\|\nabla f\|^2+\|v\|^4_{\cH^1}\|\Delta
 v\|^2).\label{comv1}
 \end{eqnarray}
 In the last step, we use the three-dimensional Agmon
 inequality that for any $v\in \cH^2\cap \cH^1_0$, it holds $\|v\|^2_{\cL^{\infty}}\leq c(\Omega)\|\nabla v\|\|\Delta
 v\|.$ Then it easily follows from \eqref{comv1} that
  \beq
   \frac{d}{dt}y(t)\leq C h_1(t)y(t)+Ch_2(t),\nonumber
 \eeq
where
 $$ y(t)=\frac{1}{2}\left(\frac{1}{U}-a\right)\|\nabla
v(t)\|^2+\frac{c}{8m}\|\Delta v(t)\|^2, \quad
h_1(t)=\|v(t)\|_{\cH^1}^4, \quad
h_2(t)=\|\phi(t)\|_{\cH^1}^2+\|f\|_{\cH^1}^2.$$ Applying the
well-known uniform Gronwall lemma (cf. e.g., \cite[Lemma
III.1.1]{Temam}), we infer from \eqref{uniH1} and \eqref{bdiH2} that
for any $r>0$
 \beq
 y(t+r)\leq C\left(1+\frac{1}{r}\right), \quad \forall \ t\geq 0.
 \eeq
 The proof is complete.
\end{proof}

Since the continuous embedding $\cH^2\hookrightarrow \cH^1$ is
compact, Proposition \ref{P4.1} implies that $v(t)$ is precompact in
$\cH^1$ for $t\geq r$.

 Next, we prove the precompactness of
 $\phi(t)$. We note that $\phi$ satisfies a Schr\"{o}dinger type equation, which does
not enjoy the smoothing property like parabolic equations. To
overcome this difficulty, we shall decompose the solution $\phi$
into a uniformly stable part and a compact part such that
$$\phi=\phi^d+\phi^c,$$
where $\phi^d(t)$ and $\phi^c(t)$  satisfy the following systems
  \beq
  \begin{cases}
  \label{d}
  \phi^d_{t}+i\left(\frac{g^2}{U}+2\nu-2\mu\right)\phi^d-\frac{i}{4m}\Delta \phi^d+\gamma \phi^d=0,\\
  \phi^d|_{\Gamma}=0,\\
  \phi^d|_{t=0}=\phi_0,
  \end{cases}
  \eeq
  and
 \beq \label{c1}\begin{cases}
 \phi^c_{t}-\frac{ig}{U}v+i\left(\frac{g^2}{U}+2\nu-2\mu\right)\phi^c-\frac{i}{4m}\Delta \phi^c+\gamma \phi^c=h,\\
 \phi^c|_{\Gamma}=0,\\
 \phi^c|_{t=0}=0.
 \end{cases}
 \eeq
  \begin{lemma}\label{P4.2} Problem \eqref{d} admits a unique global
  weak solution $\phi^d(t)\in C([0,+\infty), \cH^1_0)$ and the following estimate holds:
 \beq
 \label{d2}\|\phi^d(t)\|_{\cH^1}= \|\phi_0\|_{\cH^1}e^{-\gamma t},\quad \forall t\geq 0.
 \eeq
 \end{lemma}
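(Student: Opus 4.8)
The plan is to regard \eqref{d} as a linear Schr\"{o}dinger equation perturbed by the zeroth-order damping term $\gamma\phi^d$; for such an equation existence, uniqueness, the regularity $\phi^d\in C([0,+\infty),\cH^1_0)$ and the exact identity \eqref{d2} all follow from two elementary energy balances, once one observes that \emph{every} dispersive term contributes a purely imaginary quantity and hence drops out when we take real parts.

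First I would construct the solution by the Faedo--Galerkin scheme used in the proof of Theorem \ref{T2.1}, now applied to the single linear equation \eqref{d}. Writing $\phi^d_l=\sum_{j=1}^l b_{jl}(t)\omega_j$ with $\phi^d_l(0)=\sum_{j=1}^l(\phi_0,\omega_j)\omega_j$, the coefficients $b_{jl}$ satisfy a linear constant-coefficient ODE system and so are globally defined. Testing the Galerkin equations against $\overline{b_{jl}}$ and against $\lambda_j\overline{b_{jl}}$ (that is, against $\overline{\phi^d_l}$ and $-\Delta\overline{\phi^d_l}$), summing over $j$ and taking real parts, the contributions of $i(\frac{g^2}{U}+2\nu-2\mu)\phi^d_l$ and of $-\frac{i}{4m}\Delta\phi^d_l$ are purely imaginary and vanish, leaving
\beq
\frac12\frac{d}{dt}\|\phi^d_l\|^2+\gamma\|\phi^d_l\|^2=0,\qquad \frac12\frac{d}{dt}\|\nabla\phi^d_l\|^2+\gamma\|\nabla\phi^d_l\|^2=0.
\eeq
Integration gives $\|\phi^d_l(t)\|^2=\|\phi^d_l(0)\|^2e^{-2\gamma t}$ and $\|\nabla\phi^d_l(t)\|^2=\|\nabla\phi^d_l(0)\|^2e^{-2\gamma t}$, so $\phi^d_l$ is uniformly bounded in $L^\infty(0,T;\cH^1_0)$; the equation then bounds $\phi^d_{lt}$ uniformly in $L^\infty(0,T;\cH^{-1})$, since $-\Delta$ maps $\cH^1_0$ boundedly into $\cH^{-1}$. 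Passing to the limit $l\to\infty$ exactly as in Step 3 of the proof of Theorem \ref{T2.1} produces a global weak solution, and uniqueness is immediate: the difference of two solutions solves \eqref{d} with zero initial data, and the first identity above, applied to it, forces it to vanish identically.

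The regularity $\phi^d\in C([0,+\infty),\cH^1_0)$ I would obtain as for $\phi$ in the proof of Theorem \ref{T2.1}: from $\phi^d\in L^2(0,T;\cH^1_0)$, $\phi^d_t\in L^2(0,T;\cH^{-1})$, Lemma \ref{em} and the interpolation identity $\cH^1_0=[\cH^2\cap\cH^1_0,\cL^2]_{1/2}$ one first gets weak continuity into $\cH^1_0$, and then the energy equality (now a genuine equality, not merely an inequality) upgrades this to strong continuity. Equivalently, and perhaps more transparently, one may write $\phi^d(t)=e^{-\gamma t}U(t)\phi_0$, where $U(t)=e^{\,it(\frac{1}{4m}\Delta-(\frac{g^2}{U}+2\nu-2\mu))}$ is the unitary $C_0$-group generated on $\cL^2$ by the skew-adjoint operator $i(\frac{1}{4m}\Delta-(\frac{g^2}{U}+2\nu-2\mu))$; since this operator commutes with $A=-\Delta$, $U(t)$ restricts to a strongly continuous group of isometries of $D(A^{1/2})=\cH^1_0$, which yields both the stated regularity and the norm conservation. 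Letting $l\to\infty$ in the integrated Galerkin identities (using $\phi^d_l(0)\to\phi_0$ in $\cH^1_0$) then gives $\|\phi^d(t)\|^2=\|\phi_0\|^2e^{-2\gamma t}$ and $\|\nabla\phi^d(t)\|^2=\|\nabla\phi_0\|^2e^{-2\gamma t}$ for every $t\ge0$, i.e. \eqref{d2}. There is no genuinely hard step here; the only point deserving care is that \eqref{d2} is an \emph{equality}, so one must carry the energy balances as equalities throughout (equivalently, exploit the unitarity of $U(t)$) rather than estimate wastefully.
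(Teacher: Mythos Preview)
Your proposal is correct and follows essentially the same route as the paper: existence and uniqueness by the Galerkin scheme of Section~2, then the exact energy identity obtained by testing against $\overline{\phi^d}$ and $-\Delta\overline{\phi^d}$ and taking real parts (the paper simply combines these into the single multiplier $\overline{\phi^d}-\Delta\overline{\phi^d}$, arriving at \eqref{cccc}). Your semigroup reformulation $\phi^d(t)=e^{-\gamma t}U(t)\phi_0$ is a clean alternative the paper does not mention; one small imprecision in your first regularity chain is that Lemma~\ref{em} with $X=\cH^1_0$, $Y=\cH^{-1}$ yields $C([0,T];\cL^2)$, and weak continuity in $\cH^1_0$ then comes from \cite[Lemma~II.3.3]{Temam} rather than from the interpolation identity $\cH^1_0=[\cH^2\cap\cH^1_0,\cL^2]_{1/2}$ you cite (that identity is used in the paper for $v$, which lies in $L^2(0,T;\cH^2)$).
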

 \begin{proof}
The existence and uniqueness of solution $\phi^d$ to equation
\eqref{d} can be easily proven as in Section 2.
Multiplying
\eqref{d} by $\overline{\phi^d}-\Delta\overline{\phi^d}$,
integrating over $\Omega$ and taking the real part, we obtain
 \beq
  \frac{1}{2}\frac{d}{dt}(\|\phi^d\|^2+\|\nabla\phi^d\|^2)+\gamma(\|\phi^d\|^2+\|\nabla\phi^d\|^2)=
  0,\label{cccc}
 \eeq
 which easily yields \eqref{d2}.
 \end{proof}


\begin{lemma}\label{P4.3}
For any $r>0$, it holds
 \beq
 \|\phi^c(t)\|_{\cH^2}\leq C,\quad t \geq r,\label{com4}
 \eeq
 where $K$ is a constant depending on
 $\|v_0\|_{\mathcal{H}^1_0}, \|\phi\|_{\mathcal{H}^1_0}$, $\Omega$, $f,h$, $r$ and the coefficients of system \eqref{3}.
\end{lemma}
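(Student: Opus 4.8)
The key difficulty in \eqref{c1} is that it is a Schr\"odinger-type equation, so it has no parabolic smoothing and one cannot gain spatial regularity for $\phi^c$ by testing against powers of $-\Delta$. The plan is instead to differentiate \eqref{c1} in time — which removes the forcing $h$, since $h$ is time-independent by (A1) — to exploit the linear damping $\gamma$ so as to obtain a \emph{dissipative} bound for $\phi^c_t$ in $\cL^2$, and then to recover $\|\phi^c\|_{\cH^2}$ by reading \eqref{c1} as an elliptic equation for $\phi^c$ with $\cL^2$ right-hand side. As elsewhere, the estimates below are formal and are justified by the Galerkin scheme of Section 2.

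First I would record a uniform $\cH^1$ bound for $\phi^c$: since $\phi=\phi^d+\phi^c$, Lemma \ref{P4.2} and \eqref{uniH1} give $\|\phi^c(t)\|_{\cH^1}\le\|\phi(t)\|_{\cH^1}+\|\phi^d(t)\|_{\cH^1}\le C$ for all $t\ge 0$ (alternatively, multiply \eqref{c1} by $\overline{\phi^c}-\Delta\overline{\phi^c}$, take the real part, and close using the damping and the $\cH^1$ bounds on $v$ and $h$). Next, write $\sigma:=\frac{g^2}{U}+2\nu-2\mu$ and $\psi:=\phi^c_t$; differentiating \eqref{c1} in time shows $\psi_t-\frac{ig}{U}v_t+i\sigma\psi-\frac{i}{4m}\Delta\psi+\gamma\psi=0$ with $\psi|_\Gamma=0$. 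Multiplying by $\overline\psi$, integrating over $\Omega$ and taking the real part annihilates the $i\sigma\|\psi\|^2$ and $\frac{i}{4m}\|\nabla\psi\|^2$ terms and leaves $\frac12\frac{d}{dt}\|\psi\|^2+\gamma\|\psi\|^2=\frac{g}{U}\mathrm{Im}\int_\Omega v_t\overline{\psi}\,dx\le\frac{\gamma}{2}\|\psi\|^2+C\|v_t\|^2$, i.e. $\frac{d}{dt}\|\psi(t)\|^2+\gamma\|\psi(t)\|^2\le C\|v_t(t)\|^2$.

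Evaluating \eqref{c1} at $t=0$, where $\phi^c(0)=0$ and $v(0)=v_0$, gives $\psi(0)=\phi^c_t(0)=h+\frac{ig}{U}v_0$, so $\|\psi(0)\|\le C(\|h\|+\|v_0\|)$ (at the Galerkin level, $\phi^c_{l,t}(0)=P_l h+\frac{ig}{U}P_l v_0$ is bounded uniformly in $l$). The forcing $\|v_t\|^2$ is only locally integrable in time, but \eqref{int2} provides the \emph{uniform} bound $\int_t^{t+1}\|v_t(\tau)\|^2\,d\tau\le C$; splitting $[0,t]$ into unit subintervals then yields $\int_0^t e^{-\gamma(t-\tau)}\|v_t(\tau)\|^2\,d\tau\le C$ uniformly in $t$. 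Integrating the above differential inequality we conclude $\|\phi^c_t(t)\|^2\le e^{-\gamma t}\|\psi(0)\|^2+C\int_0^t e^{-\gamma(t-\tau)}\|v_t(\tau)\|^2\,d\tau\le C$ for all $t\ge 0$. Finally, rewrite \eqref{c1} as $\frac{i}{4m}\Delta\phi^c=\phi^c_t-\frac{ig}{U}v+i\sigma\phi^c+\gamma\phi^c-h$; combining the $\cH^1$ bound for $\phi^c$, the $\cL^2$ bound for $\phi^c_t$, the bound \eqref{uniH1} for $v$, and (A1), we get $\|\Delta\phi^c(t)\|\le C$, whence, $\Gamma$ being smooth and $\phi^c\in\cH^1_0$, elliptic regularity gives $\|\phi^c(t)\|_{\cH^2}\le C\|\Delta\phi^c(t)\|\le C$, which is \eqref{com4} (in fact for every $t\ge 0$).

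The step I expect to be the real obstacle is producing any dissipative $\cH^2$-estimate at all: one cannot test \eqref{c1} against $\Delta^2\overline{\phi^c}$ (the natural way to let the damping act on $\|\Delta\phi^c\|^2$), because the resulting $h$-term would require $h\in\cH^2$, which is not assumed. Differentiating in time removes $h$ but replaces it by the forcing $v_t$, which is only $L^2$ in time rather than bounded; the technical point that makes the Gronwall argument close is precisely the uniform local-in-time integrability \eqref{int2} of $v_t$, together with the (Galerkin-level) fact that $\phi^c_t(0)$ is a bounded element of $\cL^2$.
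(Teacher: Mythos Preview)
Your proof is correct and follows exactly the paper's strategy: differentiate \eqref{c1} in time, use the damping $\gamma$ to obtain a dissipative $\cL^2$ bound on $\phi^c_t$ (with $\phi^c_t(0)=h+\tfrac{ig}{U}v_0\in\cL^2$), and then recover $\|\phi^c\|_{\cH^2}$ from the equation by elliptic regularity. The only difference is in controlling $\int_0^t e^{-\gamma(t-\tau)}\|v_t(\tau)\|^2\,d\tau$: the paper splits at $\tau=r$ and on $[r,t]$ invokes the \emph{pointwise} bound $\sup_{\tau\ge r}\|v_t(\tau)\|\le C$ obtained from the $v$-equation together with Lemma~\ref{P4.1}, whereas your geometric-series argument based only on the uniform local integrability \eqref{int2} is slightly more economical (it avoids Lemma~\ref{P4.1}) and in fact delivers the bound for every $t\ge 0$.
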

\begin{proof} It follows from \eqref{uniH1} and \eqref{d2} that
 \beq
 \|\phi^c(t)\|_{\cH^1} \leq C,\quad \forall\  t\geq 0.\label{com5}
 \eeq
Differentiating \eqref{c1} with respect to $t$, multiplying the
resultant by $\overline{\phi^c_t}$, integrating over $\Omega$ and
taking the real part, we obtain
 \beq \frac{1}{2}\frac{d}{dt}\|\phi^c_t\|^2+\gamma\|\phi^c_t\|^2=-\frac{g}{U}\mathrm{Im}\int_{\Omega} v_t\overline{\phi^c}_tdx\leq\frac{\gamma}{2}\|\phi^c_t\|^2+\frac{g^2}{2\gamma U^2}\|v_t\|^2.
 \eeq
Namely,
 \beq
\frac{d}{dt}\|\phi^c_t\|^2+\gamma\|\phi^c_t\|^2\leq \frac{g^2}{\gamma U^2}\|v_t\|^2.\label{dphic}
 \eeq
It follows that for $\forall t\geq
r$,
 \begin{eqnarray}
 \|\phi^c_t(t)\|^2&\leq& e^{-\gamma t} \|\phi^c_t(0)\|^2
 +\frac{g^2}{\gamma U^2}e^{-\gamma t}\int_0^t e^{\gamma \tau }\|v_t(\tau)\|^2d\tau \nonumber\\
&\leq& Ce^{-\gamma r} (\|v_0\|^2+ \|h\|^2)+\frac{g^2}{\gamma U^2}e^{\gamma r}
\int_0^r \|v_t(\tau)\|^2d\tau+ \frac{g^2}{\gamma^2 U^2}(1-e^{-\gamma t})\sup_{\tau \geq r} \|v_t(\tau)\|^2\nonumber\\
&\leq& C+ \frac{g^2}{\gamma^2 U^2}\sup_{\tau \geq r} (\|v(\tau)\|_{\mathcal{H}^2}^2
+ \|\phi(\tau)\|^2+\|v(\tau)\|_{\mathcal{L}^6}^6)\nonumber\\
&\leq& C.\label{com3}
 \end{eqnarray}
Thus, we can deduce from the equation \eqref{c1} and Lemma \ref{P4.1} that
 \beq\|\phi^c(t)\|_{\cH^2}\leq C(\|v(t)\|+\|\phi^c_t(t)\|+\|\phi^c(t)\|+\|h\|)\leq C,\quad\forall t\geq r.
 \eeq
 The proof is complete.
\end{proof}
After the previous preparations, we are able to state the main result of this section:
 \begin{theorem}\label{GA}
Suppose that (A1)-(A3) are satisfied. The semigroup $S(t)$ defined
by the global weak solutions to problem \eqref{3}-\eqref{4} on
$\cH^1_0\times \cH^1_0$ possesses a compact connected global
attractor $\mathcal{A}\subset \cH^1_0\times \cH^1_0$, which is the
$\omega$-limit set of the absorbing set $\mathcal{B}_0$ such that
$\mathcal{A}=\omega(\mathcal{B}_0)$.
 \end{theorem}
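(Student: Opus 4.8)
The plan is to apply the classical existence theorem for global attractors: a strongly continuous semigroup that admits a bounded absorbing set and is asymptotically compact possesses a global attractor, which coincides with the $\omega$-limit set of the absorbing set (cf. \cite{Temam}). Strong continuity of $S(t)$ on $\cH^1_0\times\cH^1_0$ was established in Section 2 (Theorem \ref{cod} and Corollary \ref{uniw}), and Proposition \ref{P3.1} together with Remark \ref{R3.1} furnishes the bounded (invariant) absorbing set $\mathcal{B}_0$. Hence the only point still to be verified is the asymptotic compactness of $S(t)$, and all the a priori bounds needed for this have already been collected in Lemmas \ref{P4.1}--\ref{P4.3}.

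To check asymptotic compactness I would take an arbitrary bounded set $\mathcal{B}\subset\cH^1_0\times\cH^1_0$, a sequence $t_n\to+\infty$ and $(v_{0,n},\phi_{0,n})\in\mathcal{B}$, and show that $\{S(t_n)(v_{0,n},\phi_{0,n})\}$ has a subsequence converging in $\cH^1_0\times\cH^1_0$. Write $(v_n(t),\phi_n(t))=S(t)(v_{0,n},\phi_{0,n})$ and fix any $r>0$; for all $n$ large enough $t_n\geq r$, so by Lemma \ref{P4.1} the family $\{v_n(t_n)\}$ is bounded in $\cH^2(\Omega)$, the bound depending only on $\mathcal{B}$ and $r$, and since $\cH^2\hookrightarrow\cH^1$ is compact a subsequence of $\{v_n(t_n)\}$ converges strongly in $\cH^1_0$. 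For the second component use the splitting $\phi_n=\phi_n^d+\phi_n^c$ of \eqref{d}--\eqref{c1}: by Lemma \ref{P4.2}, $\|\phi_n^d(t_n)\|_{\cH^1}=\|\phi_{0,n}\|_{\cH^1}e^{-\gamma t_n}\to0$ since $\|\phi_{0,n}\|_{\cH^1}$ is uniformly bounded and $t_n\to+\infty$, while by Lemma \ref{P4.3} the family $\{\phi_n^c(t_n)\}$ is bounded in $\cH^2(\Omega)$, hence relatively compact in $\cH^1_0$. Passing to a common subsequence shows that $S(t_n)(v_{0,n},\phi_{0,n})$ converges strongly in $\cH^1_0\times\cH^1_0$. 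Equivalently, for $t\geq r$ one has $S(t)\mathcal{B}_0\subset\mathcal{K}+\big(\{0\}\times B_{\cH^1}(0,\rho(t))\big)$ with $\mathcal{K}$ compact and $\rho(t)\to0$, i.e.\ the decomposition into a ``compact part'' and a ``uniformly small part'' that characterizes asymptotically smooth semigroups; note that the genuinely ``uniformly compact for large $t$'' criterion does \emph{not} hold here, which is exactly why the $\phi^d$--$\phi^c$ splitting is needed.

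With strong continuity, the bounded absorbing set $\mathcal{B}_0$, and asymptotic compactness all in hand, the abstract theorem yields a global attractor $\mathcal{A}=\omega(\mathcal{B}_0)$ that is nonempty, compact, fully invariant under $S(t)$, and attracts every bounded subset of $\cH^1_0\times\cH^1_0$. Connectedness of $\mathcal{A}$ then follows in the usual way: the phase space $\cH^1_0\times\cH^1_0$ is connected and $\mathcal{B}_0$ is a ball, hence convex and connected, so each set $\overline{\bigcup_{t\geq s}S(t)\mathcal{B}_0}$ is connected by continuity of $S(t)$, and $\mathcal{A}$, being the decreasing intersection of these connected compact sets, is connected.

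The main obstacle here is the asymptotic compactness of the $\phi$-component: the equation for $\phi$ is of Schr\"odinger type and has no smoothing effect, so regularity cannot be bootstrapped for $\phi$ directly. That difficulty, however, has already been resolved by the decomposition $\phi=\phi^d+\phi^c$ and the estimates of Lemmas \ref{P4.2}--\ref{P4.3}, so that the remaining work in this proof is essentially assembling these facts; the only mild care required is to make the $\cH^2$-bounds uniform over a bounded set of initial data and to absorb the ``$t\geq r$'' restriction when letting $t_n\to+\infty$.
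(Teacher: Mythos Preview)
Your proposal is correct and follows essentially the same route as the paper: both rely on Lemmas \ref{P4.1}--\ref{P4.3} together with the classical abstract existence theorem from \cite{Temam}, using the $\cH^2$-bound for $v$ and the $\phi=\phi^d+\phi^c$ splitting to obtain compactness. The paper's proof is a one-sentence citation of these lemmas and \cite[Theorem I.1.1]{Temam}, whereas you have spelled out the asymptotic-compactness verification and the connectedness argument explicitly; the substance is the same.
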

\begin{proof}
Since $\mathcal{B}_0$ is a connected, invariant, bounded absorbing
set, our conclusion follows from Lemmas \ref{P4.1}-\ref{P4.3} and
the classical theory of dynamical systems (cf. e.g., \cite[Theorem
I.1.1]{Temam}).
\end{proof}

\section{Existence of Exponential Attractors}
\setcounter{equation}{0}

The following proposition implies the dissipativity of the dynamical
system $S(t)$ when it is restricted to the regular space $(\cH^2\cap
\cH_0^1)\times(\cH^2\cap\cH^1_0)$.

\begin{proposition}\label{P5.3}
There exists $R_1\geq 0$ such that the ball
$$\mathcal{B}_1=\{(v,\phi)\in (\cH^2\cap \cH_0^1)\times(\cH^2\cap\cH^1_0)\ |\ \|v\|^2_{\cH^2}+\|\phi\|^2_{\cH^2}\leq
R_1\}$$ is a bounded absorbing set for $S(t)$ in $(\cH^2\cap \cH_0^1)\times(\cH^2\cap\cH^1_0).$
\end{proposition}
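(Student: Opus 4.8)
The plan is to assemble the higher-regularity estimates of Lemmas~\ref{P4.1}--\ref{P4.3} with the $\cH^1_0\times\cH^1_0$ dissipativity of Proposition~\ref{P3.1}. The one genuinely new ingredient needed is the exponential decay of the stable part $\phi^d$ in $\cH^2$ (Lemma~\ref{P4.2} only provides it in $\cH^1_0$). Since the eigenfunctions $\{\omega_j\}$ diagonalise the linear problem \eqref{d}, its solution reads $\phi^d(t)=\sum_j\beta_j(0)\,e^{-(\gamma+i\lambda_0+i\lambda_j/4m)t}\,\omega_j$ with $\lambda_0=\frac{g^2}{U}+2\nu-2\mu$, whence $|\beta_j(t)|=|\beta_j(0)|\,e^{-\gamma t}$ and therefore $\|\Delta\phi^d(t)\|^2=e^{-2\gamma t}\|\Delta\phi_0\|^2$; equivalently, running the argument of Lemma~\ref{P4.2} at the Galerkin level but testing \eqref{d} with $\Delta^2\overline{\phi^d}$ gives $\frac{d}{dt}\|\Delta\phi^d\|^2+2\gamma\|\Delta\phi^d\|^2=0$. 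In either form, $\|\phi^d(t)\|_{\cH^2}\le C\,\|\phi_0\|_{\cH^2}\,e^{-\gamma t}$ for every $\phi_0\in\cH^2\cap\cH^1_0$. Together with Lemma~\ref{P4.3} this already shows that $\phi(t)=\phi^d(t)+\phi^c(t)$ stays in $\cH^2\cap\cH^1_0$ for all $t>0$ whenever $\phi_0\in\cH^2\cap\cH^1_0$, so that the orbit remains in the regular phase space.

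Next, fix a bounded set $\mathcal{B}\subset(\cH^2\cap\cH^1_0)^2$. By Proposition~\ref{P3.1} there is $t_\mathcal{B}>0$ such that every orbit issued from $\mathcal{B}$ satisfies $(v(t),\phi(t))\in\mathcal{B}_0$ for all $t\ge t_\mathcal{B}$; by the remark above the point $(\hat v_0,\hat\phi_0):=(v(t_\mathcal{B}),\phi(t_\mathcal{B}))$ moreover lies in $(\cH^2\cap\cH^1_0)^2$. Since the system is autonomous, I would restart the evolution from $(\hat v_0,\hat\phi_0)$, whose $\cH^1_0\times\cH^1_0$ norm is now controlled by the \emph{fixed} constant $R_0$. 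For such data: (a) Lemma~\ref{P4.1} with $r=1$ gives $\|v(t_\mathcal{B}+\tau)\|_{\cH^2}\le C_1$ for all $\tau\ge1$; (b) decomposing $\phi(t_\mathcal{B}+\cdot)=\phi^d+\phi^c$ as in \eqref{d}--\eqref{c1} with $\phi^d(0)=\hat\phi_0$, Lemma~\ref{P4.3} with $r=1$ gives $\|\phi^c(\tau)\|_{\cH^2}\le C_2$ for all $\tau\ge1$. Inspecting the proofs of these two lemmas shows that $C_1,C_2$ depend on the restarted data only through its $\cH^1$ norm --- they use only the uniform bound \eqref{uniH1}, the time-integrated estimates \eqref{int2} and \eqref{bdiH2}, and $\|\phi^c_t(0)\|\le C(\|\hat v_0\|+\|h\|)$ --- so $C_1$ and $C_2$ can be taken \emph{independent of $\mathcal{B}$}. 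Finally, (c) by the $\cH^2$ decay of $\phi^d$ above, $\|\phi^d(\tau)\|_{\cH^2}\le C\,\|\hat\phi_0\|_{\cH^2}\,e^{-\gamma\tau}\le1$ for all $\tau\ge\tau_\mathcal{B}$, where $\tau_\mathcal{B}$ depends on $\mathcal{B}$ only through the finite quantity $\|\hat\phi_0\|_{\cH^2}$.

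Putting (a)--(c) together, for every $t\ge t_\mathcal{B}+\tau_\mathcal{B}$ one gets $\|v(t)\|_{\cH^2}^2+\|\phi(t)\|_{\cH^2}^2\le C_1^2+(C_2+1)^2=:R_1$, which is precisely the claim that $\mathcal{B}_1$ absorbs $\mathcal{B}$. I expect the only non-routine point to be the structural one already encountered for the global attractor: the equation for $\phi$ is of Schr\"{o}dinger type and has no smoothing, so $\cH^2$-regularity of $\phi$ cannot be created in finite time but must be carried forward from the datum $\phi_0$ by $\phi^d$ --- and it is precisely because $\phi^d$ \emph{decays} in $\cH^2$ that the absorbing radius $R_1$ can be chosen independent of $\mathcal{B}$. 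The remaining work is the careful, but routine, verification that the constants in Lemmas~\ref{P4.1} and \ref{P4.3} become universal once the trajectory has entered $\mathcal{B}_0$.
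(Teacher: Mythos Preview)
Your argument is correct, but it is a genuinely different route from the one the paper takes. The paper does \emph{not} recycle Lemmas~\ref{P4.1}--\ref{P4.3}; instead it differentiates both equations in time, tests with $\overline{v_t}$ and $\overline{\phi_t}$, and combines the resulting inequalities with the $\cH^1$ energy $E_1(t)$ into a single quantity $E_3(t)\sim\|v_t\|^2+\|\phi_t\|^2+E_1(t)$ satisfying $\frac{d}{dt}E_3+K_2E_3\le K_3$ for all $t\ge t_0$. This yields exponential decay of $\|v_t\|^2+\|\phi_t\|^2$ to a universal bound, and the $\cH^2$ control of $(v,\phi)$ is then read off the equations themselves by elliptic regularity. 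In particular, no decomposition $\phi=\phi^d+\phi^c$ appears in this proof.

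Your approach is more economical in reusing the smoothing estimates already established for the global attractor, at the price of the extra observation that $\phi^d$ decays in $\cH^2$ (immediate, as you note, from the spectral representation) and some careful dependency-tracking in the restart argument. The paper's approach, on the other hand, is more self-contained and has the advantage that the dissipative inequality for $\|v_t\|^2+\|\phi_t\|^2$ is exactly what is needed later: Corollary~\ref{cor1} (the time-integrated bound on $\|v_t\|_{\cH^1}^2+\|\phi_t\|^2$) and the verification of condition~(C1) in the Appendix both rely on the uniform estimate \eqref{est} on $\|v_t\|+\|\phi_t\|$, which your route does not produce directly.
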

\begin{proof} Let $\mathcal{B}$ be any bounded set in $(\cH^2\cap
\cH_0^1)\times(\cH^2\cap\cH^1_0)$. In particular, there exist
$r_1\geq r_0\geq 0$ such that
 $$\sup\limits_{(v,\phi)\in \mathcal{B}}\|(v,\phi)\|_{\cH^1\times\cH^1}\leq r_0\qquad{and}\qquad\sup\limits_{(v,\phi)
 \in \mathcal{B}}\|(v,\phi)\|_{\cH^2\times\cH^2}\leq r_1.$$
 Within the proof, we denote by $K_j\  (j=1,2,...)$ positive constants that may
depend on the coefficients of the system \eqref{3}, $\Omega$, $f, h$, but not on the initial data $v_0,
\phi_0$ and time.

Differentiating the first equation in \eqref{3} with respect to
time, multiplying the result by $\overline v_t$, integrating over
$\Omega$ and taking the imaginary/real part, respectively, we have
 \begin{eqnarray}
 && \frac{d_i}{2}\frac{d}{dt}\|v_t\|^2+\frac{c}{4m}\|\nabla v_t\|^2+
 \frac{1-aU}{U}\|v_t\|^2\nonumber\\
 & =& \frac{g}{U}{\rm Re}\int_\Omega \phi_t \overline
v_t dx-b{\rm Re}\int_\Omega (|v|^2v)_t\overline v_t dx -d_r{\rm
Im}\int_\Omega v_{tt} \overline v_tdx,\label{vvt}
 \end{eqnarray}
 \beq
 \frac{d_r}{2}\frac{d}{dt}\|v_t\|^2-d_i{\rm Im}\int_\Omega v_{tt} \overline
 v_tdx=-\frac{g}{U}{\rm Im}\int_\Omega \phi_t \overline
v_t dx+b{\rm Im}\int_\Omega (|v|^2v)_t\overline v_t dx.\label{vvta}
 \eeq
 Inserting \eqref{vvta} into \eqref{vvt}, we have
  \begin{eqnarray}
 && \left(\frac{d_i}{2}+\frac{d_r^2}{2d_i}\right)\frac{d}{dt}\|v_t\|^2+\frac{c}{4m}\|\nabla v_t\|^2+
 \frac{1-aU}{U}\|v_t\|^2\nonumber\\
 & =& \frac{g}{U}\left({\rm Re}\int_\Omega \phi_t \overline
v_t dx- \frac{d_r}{d_i}{\rm Im}\int_\Omega \phi_t \overline v_t
dx\right)  -b\left({\rm Re}\int_\Omega (|v|^2v)_t\overline v_t
dx-\frac{d_r}{d_i} {\rm Im}\int_\Omega (|v|^2v)_t\overline v_t
dx\right)\nonumber\\
&:=&I_1+I_2,\nonumber
 \end{eqnarray}
 where
 \beq
 I_1\leq C\|\phi_t\|\|v_t\|\leq
 \frac{\gamma}{2}\|\phi_t\|^2+C\|v_t\|^2,
 \eeq
 \beq
 I_2\leq C\|v\|_{\mathcal{L}^6}^2\|v_t\|_{\mathcal{L}^3}^2\leq
 C\|v\|_{\mathcal{H}^1}^2\|\nabla v_t\|\|v_t\|\leq \frac{c}{8m}\|\nabla
 v_t\|^2+C\|v\|_{\mathcal{H}^1}^4\|v_t\|^2.
 \eeq
As a result,
 \beq
 \left(\frac{d_i}{2}+\frac{d_r^2}{2d_i}\right)\frac{d}{dt}\|v_t\|^2+\frac{c}{8m}\|\nabla v_t\|^2+
 \frac{1-aU}{U}\|v_t\|^2\leq
\frac{\gamma}{2}\|\phi_t\|^2+C\|v\|_{\mathcal{H}^1}^4\|v_t\|^2.\label{vvtb}
 \eeq
On the other hand, differentiating the $\phi$-equation in \eqref{3}
with respect to $t$, multiplying the resultant by
$\overline{\phi}_t$, integrating over $\Omega$ and taking the real
part, in analogy to \eqref{dphic} we obtain that
 \beq
 \frac{d}{dt}\|\phi_t\|^2+{\gamma}\|\phi_t\|^2\leq \frac{g^2}{\gamma U^2}\|v_t\|^2.\label{ppt}
 \eeq
 By \eqref{vvtb}, \eqref{ppt} and Cauchy-Schwarz inequality, we get
 \beq
 \frac{d}{dt}\left[\left(d_i+\frac{d_r^2}{d_i}\right)\|v_t\|^2+2\|\phi_t\|^2\right]+\frac{c}{4m}\|\nabla
 v_t\|^2+{\gamma}\|\phi_t\|^2\leq
 \left(\frac{2g^2}{\gamma U^2}+C\|v\|_{\mathcal{H}^1}^4\right)\|v_t\|^2.\label{hie}
 \eeq
It follows from \eqref{uniH1} that there exists $t_0=t_0(r_0)>0$
such that $\|v(t)\|_{\mathcal{H}^1}\leq M$ for $t\in[0, t_0]$ and
$\|v(t)\|_{\mathcal{H}^1}\leq M'$ for all $t\geq t_0$, with $M$ being a
constant depending on $r_0$ while $M'$ being independent of $r_0$.
Thus, on $[0,t_0]$, \eqref{hie} implies that
 \beq
 \frac{d}{dt}\left[\left(d_i+\frac{d_r^2}{d_i}\right)\|v_t\|^2+2\|\phi_t\|^2\right]\leq
 \left(\frac{2g^2}{\gamma U^2}+CM^4\right)\|v_t\|^2,
 \eeq
 which together with the Gronwall inequality yields
 \beq
 \|v_t(t_0)\|^2+\|\phi_t(t_0)\|^2\leq M_1,\label{vpta}
 \eeq
 where $M_1$ depends on $r_1$, $r_0$ and $t_0$.
\noindent  Let us start from time $t_0$. We infer from \eqref{hie}
that
  \beq
 \frac{d}{dt}\left[\left(d_i+\frac{d_r^2}{d_i}\right)\|v_t\|^2+2\|\phi_t\|^2\right]+\frac{c}{4m}\|\nabla
 v_t\|^2+{\gamma}\|\phi_t\|^2\leq K_1 \|v_t\|^2, \quad \forall \ t\geq t_0,
 \label{hiea}
 \eeq
where $K_1=\frac{2g^2}{\gamma U^2}+C(M')^4$. Denote
 \beq
 E_3(t)=\left(d_i+\frac{d_r^2}{d_i}\right)\|v_t(t)\|^2+2\|\phi_t(t)\|^2+\frac{4K_1+4}{d_i} E_1(t).\label{E302}
 \eeq
 Then it follows from \eqref{ABO1} and  \eqref{hiea} that
 \beq
 \frac{d}{dt}E_3(t)+K_2E_3(t)\leq K_3, \quad \forall t\geq t_0,
 \eeq
which yields
 \beq
 E_3(t)\leq e^{-K_2t}e^{K_2t_0}E_3(t_0)+\frac{K_3}{K_2}, \quad \forall t\geq t_0.\label{uniE3}
 \eeq
 From \eqref{E302} and \eqref{vpta}, we know that $E_3(t_0)$
 can be bounded by a constant depending on $r_1$, $r_0$ and $t_0$. Then
 it follows from \eqref{uniE3} that  there exists
 a time $t_1 \geq t_0$  depending on $r_1$, $r_0$ and $t_0$ such that
 \beq
 \left(d_i+\frac{d_r^2}{d_i}\right)\|v_t(t)\|^2+2\|\phi_t(t)\|^2\leq E_3(t)\leq \frac{2K_3}{K_2},\label{est}
 \quad \forall t\geq t_1.
 \eeq
On the other hand, we deduce from \eqref{3} that
 \begin{eqnarray}
 && \|v(t)\|_{\cH^2}\leq C
  (\|v_t(t)\|+\|\phi(t)\|+\|v(t)\|+\|v(t)\|_{\mathcal{L}^6}^3+\|f\|),\nonumber\\
  && \|\phi(t)\|^2_{\cH^2}\leq
  C(\|\phi_t(t)\|+\|\phi(t)\|+\|v(t)\|+\|h\|),\nonumber
 \end{eqnarray}
 where $C$ is a constant depending only on the coefficients of
 system \eqref{3}. Thus, from \eqref{est} and Proposition \ref{P3.1},
 we can see that there exists a constant $R_1>0$ independent of $r_0, r_1$ such that
 $$\sup\limits_{(v_0, \phi_0)\in \mathcal{B}}\sup_{t\geq t_1} \|S(t)(v_0,\phi_0)\|_{\cH^2\times\cH^2}\leq R_1.$$
The proof is complete.
\end{proof}
As a byproduct, the above lemma gives the following integral
estimate
\begin{corollary}\label{cor1}
There holds
\beq\label{integral}\sup\limits_{\|(v,\phi)\|_{\cH^2\times\cH^2}\leq
R}\sup\limits_{t\geq0}\int_t^{t+1}(\|v_t\|_{\cH^1}^2+\|\phi_t\|^2)d\tau\leq
C(R).\eeq
\end{corollary}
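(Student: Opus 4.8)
The plan is to obtain \eqref{integral} by integrating, over unit time intervals, the differential inequalities already produced in the proofs of Proposition~\ref{P3.1} and Proposition~\ref{P5.3}, once one upgrades \eqref{est} to a \emph{uniform-in-time} bound on $\|v_t\|^2+\|\phi_t\|^2$ with a constant depending only on $R$. \textbf{Step 1 (control of $\int\|v_t\|^2$).} Since $\cH^2\cap\cH^1_0\hookrightarrow\cH^1_0$, the hypothesis $\|(v,\phi)\|_{\cH^2\times\cH^2}\le R$ gives a bound on the data in $\cH^1_0\times\cH^1_0$, so \eqref{uniH1} provides $C(R)$ with $\|v(t)\|_{\cH^1}^2+\|\phi(t)\|_{\cH^1}^2\le C(R)$, hence $E_1(t)\le C(R)$, for all $t\ge0$. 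Integrating \eqref{ABO1} over $[t,t+1]$ and discarding the nonnegative terms $C_5E_1$ and $E_1(t+1)$ gives $\frac{d_i}{4}\int_t^{t+1}\|v_t\|^2\,d\tau\le E_1(t)+C_6\le C(R)$.

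\textbf{Step 2 (a uniform bound on $\|v_t\|^2+\|\phi_t\|^2$).} Set $W(t)=\bigl(d_i+\tfrac{d_r^2}{d_i}\bigr)\|v_t\|^2+2\|\phi_t\|^2$. Evaluating the two equations of \eqref{3} at $t=0$ and using the Sobolev embedding $\cH^1\hookrightarrow\cL^6$ to control $|v|^2v$, one gets $W(0)\le C(R)$. On the transient interval $[0,t_0]$, where $t_0=t_0(R)$ is the entrance time provided by \eqref{uniH1} (so that $\|v(t)\|_{\cH^1}^4\le C(R)$ there), inequality \eqref{hie} reads $\tfrac{d}{dt}W\le C(R)\,W$, and Gronwall yields $W(t)\le C(R)$ on $[0,t_0]$ (this reproves \eqref{vpta}). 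For $t\ge t_0$, combining \eqref{hiea}, \eqref{ABO1} and the definition \eqref{E302} gives $\tfrac{d}{dt}E_3+K_2E_3\le K_3$; since $E_3(t_0)=W(t_0)+\tfrac{4K_1+4}{d_i}E_1(t_0)\le C(R)$ and $t_0=t_0(R)$, \eqref{uniE3} forces $E_3(t)\le C(R)$, hence $W(t)\le E_3(t)\le C(R)$, for all $t\ge t_0$. Therefore $\sup_{t\ge0}\bigl(\|v_t(t)\|^2+\|\phi_t(t)\|^2\bigr)\le C(R)$.

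\textbf{Step 3 (conclusion).} Integrating \eqref{hie} over $[t,t+1]$, discarding $W(t+1)\ge0$, and using Step 2, the bound $\|v(\tau)\|_{\cH^1}^4\le C(R)$, and Step 1,
\[
\tfrac{c}{4m}\int_t^{t+1}\|\nabla v_t\|^2\,d\tau+\gamma\int_t^{t+1}\|\phi_t\|^2\,d\tau\le W(t)+\Bigl(\tfrac{2g^2}{\gamma U^2}+C(R)\Bigr)\int_t^{t+1}\|v_t\|^2\,d\tau\le C(R),
\]
and adding the bound for $\int_t^{t+1}\|v_t\|^2$ from Step 1 gives $\int_t^{t+1}\bigl(\|v_t\|_{\cH^1}^2+\|\phi_t\|^2\bigr)\,d\tau\le C(R)$ uniformly in $t\ge0$, i.e. \eqref{integral}. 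The only delicate point is Step 2: on $[0,t_0]$ the bound on $\|v_t\|^2+\|\phi_t\|^2$ is genuinely $R$-dependent, so one must carry along the facts that the entrance time $t_0$ and the initial values $\|v_t(0)\|,\|\phi_t(0)\|$ are themselves controlled by $R$ — the latter via the $\cH^2$-regularity of the data and the cubic Sobolev embedding — before the damping in \eqref{hiea}--\eqref{uniE3} renders the estimate effective; the rest is routine integration of inequalities already in hand.
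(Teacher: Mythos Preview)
Your proof is correct and aligns with the paper's intent: the paper simply states Corollary~\ref{cor1} as ``a byproduct'' of Proposition~\ref{P5.3} without supplying details, and you have filled in precisely the steps one would expect --- integrating \eqref{ABO1} to control $\int\|v_t\|^2$, upgrading the argument of Proposition~\ref{P5.3} to a uniform-in-time bound on $\|v_t\|^2+\|\phi_t\|^2$ (with constants depending on $R$ through the $\cH^2$-data and the entrance time $t_0(R)$), and then integrating \eqref{hie} to extract the $\|\nabla v_t\|^2$ and $\|\phi_t\|^2$ integrals.
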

Next, we prove the following proposition that enables us to confine
the dynamics of system \eqref{3}-\eqref{4} to a regular set
$\tilde{\mathcal{B}}_1\subset\ (\cH^2\cap
\cH_0^1)\times(\cH^2\cap\cH^1_0)$.

\begin{proposition}\label{P5.2}
There exists a closed ball $\tilde{\mathcal{B}}_1\subset\ (\cH^2\cap \cH_0^1)\times(\cH^2\cap\cH^1_0)$
such that\\
(i) there is a positive increasing function $M$ such that for every bounded set $\mathcal{B}\subset \cH_0^1\times \cH_0^1$
with $R=\sup\limits_{(v,\phi)\in\mathcal{B}}\|(v,\phi)\|_{\cH_0^1\times \cH_0^1}$, the following estimate holds:
 \beq \mathrm{dist}_{\cH_0^1\times \cH_0^1}(S(t)\mathcal{B},\tilde{\mathcal{B}}_1)\leq M(R)e^{-\gamma t};
 \eeq
(ii) there is a time $\tilde{t}_1\geq0$ depending on
$\tilde{R}=\sup\limits_{(v,\phi)\in\tilde{\mathcal{B}}_1}\|(v,\phi)\|_{\cH^2\times
\cH^2}$ such that
 \beq S(t)\tilde{\mathcal{B}}_1\subset\tilde{\mathcal{B}}_1,\qquad\forall t\geq \tilde{t}_1.\eeq
\end{proposition}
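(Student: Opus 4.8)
The plan is to build the absorbing ball $\tilde{\mathcal{B}}_1$ out of the decomposition $\phi=\phi^d+\phi^c$ already introduced in \eqref{d}--\eqref{c1}, together with an analogous splitting of the $v$-component, so that one part decays like $e^{-\gamma t}$ and the other part lands in a fixed bounded subset of $(\cH^2\cap\cH_0^1)\times(\cH^2\cap\cH_0^1)$. Concretely, I would write $v=v^d+v^c$ where $v^d$ solves the linear homogeneous equation $d v^d_t-(a-\frac1U)iv^d-\frac{ic}{4m}\Delta v^d=0$ with $v^d|_{t=0}=v_0$, which (taking imaginary parts after testing against $\overline{v^d}-\Delta\overline{v^d}$, using $d_i>0$ and $aU<1$) yields $\|v^d(t)\|_{\cH^1}\le C\|v_0\|_{\cH^1}e^{-\sigma t}$ for some $\sigma>0$; and $v^c=v-v^d$ carries the nonlinearity and the forcing. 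Since $v(t)$ is already bounded in $\cH^2$ for $t\ge r$ by Lemma~\ref{P4.1} and $\phi^c(t)$ is bounded in $\cH^2$ for $t\ge r$ by Lemma~\ref{P4.3}, while $\phi^d(t)$ decays in $\cH^1$ by \eqref{d2} and $v^d(t)$ decays in $\cH^1$ by the above, the ``regular part'' $(v^c,\phi^c)$ (or simply $(v,\phi^c)$, absorbing $v^d$ into the exponentially small remainder) is eventually confined to a fixed ball of $(\cH^2\cap\cH_0^1)^2$.

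For part (i), given $\mathcal{B}$ with $\cH^1_0\times\cH^1_0$-radius $R$, I would take $\tilde{\mathcal{B}}_1$ to be the closed ball in $(\cH^2\cap\cH_0^1)^2$ of radius $\tilde R$ where $\tilde R$ is chosen larger than the $\cH^2$-bounds furnished by Lemma~\ref{P4.1} and Lemma~\ref{P4.3} at time, say, $r=1$ (these bounds being increasing functions of $R$). Then for $t\ge 1$,
\[
\mathrm{dist}_{\cH_0^1\times\cH_0^1}\big(S(t)(v_0,\phi_0),\tilde{\mathcal{B}}_1\big)
\le \|v^d(t)\|_{\cH^1}+\|\phi^d(t)\|_{\cH^1}
\le \big(C\|v_0\|_{\cH^1}+\|\phi_0\|_{\cH^1}\big)e^{-\min(\sigma,\gamma)t},
\]
and after possibly enlarging the constant to cover the compact time interval $[0,1]$ (on which $S(t)\mathcal{B}$ stays bounded in $\cH^1_0\times\cH^1_0$ by \eqref{uniH1}), one obtains $\mathrm{dist}_{\cH_0^1\times\cH_0^1}(S(t)\mathcal{B},\tilde{\mathcal{B}}_1)\le M(R)e^{-\gamma t}$ with $M$ increasing in $R$; here I would simply shrink the exponent to $\gamma$ if $\sigma>\gamma$, which is harmless.

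For part (ii), I would invoke Proposition~\ref{P5.3}: since $\tilde{\mathcal{B}}_1$ is a bounded subset of $(\cH^2\cap\cH_0^1)^2$ with $\cH^2\times\cH^2$-radius $\tilde R$, and $\mathcal{B}_1$ from Proposition~\ref{P5.3} is absorbing there, there is $\tilde t_1=\tilde t_1(\tilde R)$ with $S(t)\tilde{\mathcal{B}}_1\subset\mathcal{B}_1$ for all $t\ge\tilde t_1$. If $\tilde{\mathcal{B}}_1$ was initially chosen to contain $\mathcal{B}_1$ (enlarge $\tilde R$ so that $\tilde R^2\ge R_1$ — still a legitimate increasing function of $R$), this gives the positive invariance $S(t)\tilde{\mathcal{B}}_1\subset\tilde{\mathcal{B}}_1$ for $t\ge\tilde t_1$. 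The main obstacle I anticipate is not any single estimate but the bookkeeping of constants: one must verify that the radius $\tilde R$ of $\tilde{\mathcal{B}}_1$ can be fixed \emph{independently} of $\mathcal{B}$ (it depends only on the system data, via the $R_0$, $R_1$ of Propositions~\ref{P3.1}, \ref{P5.3} and the $t\ge r$ estimates of Lemmas~\ref{P4.1}, \ref{P4.3}), while the \emph{rate} prefactor $M(R)$ and the invariance time $\tilde t_1$ are allowed to depend on $R$ and $\tilde R$ respectively — this is exactly the splitting of dependencies demanded by the statement, and the decomposition $\phi=\phi^d+\phi^c$ (plus its $v$-analogue) is designed to make it transparent.
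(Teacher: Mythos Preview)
Your proposal is correct and follows essentially the same route as the paper: the paper's proof simply cites Lemmas~\ref{P4.1}--\ref{P4.3} to obtain an exponentially attracting $\cH^2\times\cH^2$-ball $\mathcal{G}$ (via the pair $(v,\phi^c)$ as regular part and $\phi^d$ as the $e^{-\gamma t}$-decaying remainder), then enlarges $\mathcal{G}$ using Proposition~\ref{P5.3} to make it eventually invariant. Your extra $v=v^d+v^c$ splitting is harmless but unnecessary, as you yourself note; and your identification of the bookkeeping issue---fixing $\tilde R$ via $R_0,R_1$ while letting $M(R)$ absorb the $R$-dependence---is exactly what makes the argument work, though the paper leaves this implicit.
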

\begin{proof}
 The existence of a bounded exponential attracting ball
 $\mathcal{G}\subset  (\cH^2\cap \cH_0^1)\times(\cH^2\cap\cH^1_0)$ in the $\cH_0^1\times \cH_0^1$ metric is
 given by Lemmas \ref{P4.1}-\ref{P4.3}.
 Next, using Proposition \ref{P5.3}, we can enlarge the ball $\mathcal{G}$ properly such that under the
 action of $S(t)$, after a time $t_1=t_1(\mathcal{G})$, $\mathcal{G}$ is absorbed into itself.
 Taking $\tilde{\mathcal{B}}_1=\mathcal{G}$, we complete the proof.
\end{proof}

We can now state the main result of this section:

\begin{theorem}\label{EPA}
The semigroup $S(t)$ possesses an exponential attractor
$\mathcal{E}\subset (\cH^2\cap \cH_0^1)\times(\cH^2\cap\cH^1_0)$.
Thus, by definition , we have that\\
(i) $\mathcal{E}$ is a closed compact set in $\cH_0^1\times\cH^1_0$
that is positively invariant for $S(t$).\\
(ii) The fractal dimension of $\mathcal{E}$ is finite.\\
(iii) $\mathcal{E}$ satisfies the following exponential attraction
property: there exist a constant $\omega > 0$ and a positive
increasing function $J$ such that, for every bounded set
$\mathcal{B}\subset (\cH^2\cap \cH_0^1)\times(\cH^2\cap\cH^1_0)$
with $R =
\sup_{(v,\phi)\in\mathcal{B}}\|(v,\phi)\|_{\cH^1_0\times\cH^1_0}$,
it holds \beq {\rm dist}_{\cH^1_0\times\cH^1_0}(S(t)\mathcal{B},
\mathcal{E})\leq J(R)e^{-\omega t}, \quad \forall t \geq
0.\label{exp} \eeq
\end{theorem}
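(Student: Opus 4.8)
The plan is to apply the abstract construction of exponential attractors in Banach spaces due to \cite{EMZ} (cf. also \cite{GGMP,BG,EMZ1}) to the discrete dynamical system generated by the time-$t^*$ map $S(t^*)$, for a conveniently chosen $t^*>0$, and then to recover a continuous exponential attractor in the usual way. First, by Proposition \ref{P5.2} there is a ball $\tilde{\mathcal B}_1$, bounded in $(\cH^2\cap\cH^1_0)\times(\cH^2\cap\cH^1_0)$, which attracts every $\cH^1_0\times\cH^1_0$-bounded set exponentially fast in the $\cH^1_0\times\cH^1_0$-metric and is absorbed into itself after a finite time. Using the \emph{a priori} estimates of Sections~2--3 on the corresponding compact time interval, the set $\mathbb B:=\overline{\bigcup_{t\ge 0}S(t)\tilde{\mathcal B}_1}$ (closure in $\cH^1_0\times\cH^1_0$) is still bounded in $(\cH^2\cap\cH^1_0)\times(\cH^2\cap\cH^1_0)$, closed, and satisfies $S(t)\mathbb B\subset\mathbb B$ for all $t\ge 0$. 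We henceforth work with trajectories issuing from $\mathbb B$.

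\textbf{The key step} is to establish, for a fixed $t^*>0$ with $e^{-\gamma t^*}$ sufficiently small, a contraction-plus-smoothing decomposition of $S(t^*)$ acting on differences. Given $(v_{0i},\phi_{0i})\in\mathbb B$, $i=1,2$, put $(v,\phi):=(v_1-v_2,\phi_1-\phi_2)$, which solves \eqref{u1}--\eqref{u2}, and split $\phi=\phi^d+\phi^c$ exactly as in \eqref{d}--\eqref{c1}, but with $\phi^d(0)=\phi_{01}-\phi_{02}$, $\phi^c(0)=0$ and $h$ removed. Then
\[
S(t^*)(v_{01},\phi_{01})-S(t^*)(v_{02},\phi_{02})=\big(0,\phi^d(t^*)\big)+\big(v(t^*),\phi^c(t^*)\big),
\]
where $\|(0,\phi^d(t^*))\|_{\cH^1\times\cH^1}\le e^{-\gamma t^*}\|(v_{01}-v_{02},\phi_{01}-\phi_{02})\|_{\cH^1\times\cH^1}$ is immediate from Lemma \ref{P4.2}, while I claim $\|(v(t^*),\phi^c(t^*))\|_{\cH^2\times\cH^2}\le\Lambda\,\|(v_{01}-v_{02},\phi_{01}-\phi_{02})\|_{\cH^1\times\cH^1}$. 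For $\phi^c$: differentiating \eqref{c1} in time and testing with $\overline{\phi^c_t}$ gives $\frac{d}{dt}\|\phi^c_t\|^2+\gamma\|\phi^c_t\|^2\le\frac{g^2}{\gamma U^2}\|v_t\|^2$ as in \eqref{dphic}; since $\phi^c(0)=0$ forces $\|\phi^c_t(0)\|\le C\|v(0)\|$, and since $\int_0^{t^*}\|v_t\|^2\le C\|\mathrm{data}\|_{\cH^1\times\cH^1}^2$ by Theorem \ref{cod}, one obtains $\|\phi^c_t(t^*)\|+\|\phi^c(t^*)\|\le C\|\mathrm{data}\|_{\cH^1\times\cH^1}$, hence $\|\phi^c(t^*)\|_{\cH^2}\le C\|\mathrm{data}\|_{\cH^1\times\cH^1}$ by reading $\Delta\phi^c$ off \eqref{c1}. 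For $v$: the difference equation \eqref{u1} is of damped complex Ginzburg--Landau type ($d_i>0$) and hence smoothing; testing with $-\Delta\overline v$ and with $-\Delta\overline v_t$, and using that $v_1,v_2$ are bounded in $\cH^2$ on $\mathbb B$, one first gets $\int_0^{t^*}\|\Delta v\|^2\le C\|\mathrm{data}\|_{\cH^1\times\cH^1}^2$ and then, via the uniform Gronwall lemma (with the integrals of $\|v\|_{\cH^1}^4$, $\|\nabla\phi\|^2$ and $\|\Delta v\|^2$ as ingredients), $\|v(t^*)\|_{\cH^2}\le C\|\mathrm{data}\|_{\cH^1\times\cH^1}$. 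The two-stage argument is forced by the fact that the difference data lie only in $\cH^1_0\times\cH^1_0$, so $\|\Delta v(0)\|$ is unavailable; this, together with the absence of smoothing in the $\phi$-equation, is the main obstacle of the whole proof.

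\textbf{Abstract construction and passage to continuous time.} Since $\cH^2\times\cH^2$ embeds compactly in $\cH^1_0\times\cH^1_0$ and $e^{-\gamma t^*}$ is small, the previous step says precisely that $S(t^*):\mathbb B\to\mathbb B$ fulfils the hypotheses of \cite{EMZ}, which provides a set $\mathcal E_d\subset\mathbb B$, compact in $\cH^1_0\times\cH^1_0$, of finite fractal dimension and positively invariant under $S(t^*)$, that attracts $\mathbb B$ exponentially fast in discrete time. Put $\mathcal E:=\bigcup_{t\in[0,t^*]}S(t)\mathcal E_d$. The map $(v_0,\phi_0)\mapsto S(t)(v_0,\phi_0)$ is Lipschitz on $\mathbb B$ uniformly for $t\in[0,t^*]$ by Theorem \ref{cod}, and $t\mapsto S(t)(v_0,\phi_0)$ is uniformly Hölder on $[0,t^*]$ by Corollary \ref{cor1}, the uniform $\cH^2\times\cH^2$-bound on $\mathbb B$, and interpolation ($\|w(t)-w(s)\|_{\cH^1}\le C\|w(t)-w(s)\|_{\cL^2}^{1/2}\|w(t)-w(s)\|_{\cH^2}^{1/2}\le C|t-s|^{1/4}$). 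Hence $(t,u)\mapsto S(t)u$ is Hölder continuous on $[0,t^*]\times\mathbb B$, so $\mathcal E$ is compact (whence closed) in $\cH^1_0\times\cH^1_0$, has finite fractal dimension (being a Hölder image of the finite-dimensional compact set $[0,t^*]\times\mathcal E_d$), is positively invariant for $S(t)$, and — by the standard discrete-to-continuous argument — attracts $\mathbb B$ exponentially in continuous time. This yields (i) and (ii).

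\textbf{Attraction of bounded sets.} For a bounded set $\mathcal B\subset(\cH^2\cap\cH^1_0)\times(\cH^2\cap\cH^1_0)$ with $R=\sup_{(v,\phi)\in\mathcal B}\|(v,\phi)\|_{\cH^1_0\times\cH^1_0}$, Proposition \ref{P5.2}(i) gives $\mathrm{dist}_{\cH^1_0\times\cH^1_0}(S(t)\mathcal B,\mathbb B)\le M(R)e^{-\gamma t}$. Combining this with the exponential attraction of $\mathbb B$ by $\mathcal E$, the Lipschitz continuity of $S(t)$ on bounded sets with at most exponentially growing constant (Theorem \ref{cod}), and the transitivity of exponential attraction (cf. \cite{MZsur}), we obtain a constant $\omega>0$ and an increasing function $J$ for which \eqref{exp} holds, which is (iii). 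This completes the proof.
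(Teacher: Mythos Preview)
Your argument is correct, but your verification of the smoothing condition (C2) proceeds along a genuinely different decomposition than the paper's. The paper splits \emph{both} components of the difference, writing $(v,\phi)=(v^d,\phi^d)+(v^c,\phi^c)$, where $(v^d,\phi^d)$ solves a fully decoupled linear system with the initial data and is shown to decay exponentially in $\cH^1_0\times\cH^1_0$, while $(v^c,\phi^c)$ starts from zero and is upgraded to $\cH^2\times\cH^2$ by estimating $\|v^c_t\|$ and $\|\phi^c_t\|$ and then reading off $\Delta v^c,\Delta\phi^c$ from the equations. You instead place the \emph{entire} $v$-difference into the compact part, splitting only $\phi=\phi^d+\phi^c$, and recover $\|v(t^*)\|_{\cH^2}\le C\|z_{01}-z_{02}\|_{\cH^1_0\times\cH^1_0}$ directly from the parabolic smoothing of the complex Ginzburg--Landau equation via the two-stage test against $-\Delta\overline v$ and $-\Delta\overline v_t$ plus the uniform Gronwall trick (necessary precisely because $\|\Delta v(0)\|$ is unavailable). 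Your route is somewhat leaner---one auxiliary system fewer---and makes the role of parabolic smoothing more transparent; the paper's route, by arranging zero initial data for $(v^c,\phi^c)$, avoids the smoothing-in-time detour and yields cleaner time-derivative estimates. Your H\"older continuity argument via the interpolation $\|w(t)-w(s)\|_{\cH^1}\le C\|w(t)-w(s)\|_{\cL^2}^{1/2}\|w(t)-w(s)\|_{\cH^2}^{1/2}$ (giving exponent $1/4$) is also a legitimate alternative to the paper's direct computation on the $\phi$-equation (which gives exponent $1/2$); either suffices for the abstract construction. One small remark: in your uniform Gronwall step the multiplicative coefficient is actually a constant once $v_1,v_2$ are bounded in $\cH^2$, so the factor $\|v\|_{\cH^1}^4$ you list is not really needed there.
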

\begin{proof}
The proof of Theorem \ref{EPA} consists of several steps.

\textbf{Step 1.}  First, we confine the dynamics of system
\eqref{3}-\eqref{4} to the regular set
$\tilde{\mathcal{B}}_1\subset\ (\cH^2\cap
\cH_0^1)\times(\cH^2\cap\cH^1_0)$ obtained in Proposition
\ref{P5.2}. In order to prove the existence of an exponential
attractor, we shall use the simple constructive method introduced in
\cite[Proposition 1]{EMZ} and follow the strategy in \cite{BG} (cf.
also \cite{GGP}). For the reader's convenience, we report the
following lemma adapted to our present case (cf. \cite[Lemma
5.3]{BG}).
\begin{lemma}
\label{lm5} Let $\tilde{\mathcal{B}}_1$ and $\tilde{t}_1$  be as in
Proposition \ref{P5.2} and denote $z=(v,\phi)$.
Suppose that there exists $t^*\geq \tilde{t}_1$ such that the following conditions are satisfied:\\
(C1) The map $
 (t,z)\mapsto S(t)z: [t^*,2t^*]\times\tilde{\mathcal{B}}_1\rightarrow\tilde{\mathcal{B}}_1
 $
 is $\frac{1}{2}$-H\"{o}lder continuous in time and Lipschitz continuous in the initial
 data, when $\tilde{\mathcal{B}}_1$ is endowed with the $\cH_0^1\times\cH^1_0$-topology.\\
(C2) Setting $S=S(t^*),$ there are $\lambda\in(0,\frac{1}{2})$ and
$\Lambda\geq0$ such that, for every $z_{01},$
$z_{02}\in\tilde{\mathcal{B}}_1$, $
Sz_{01}-Sz_{02}=D(z_{01},z_{02})+K(z_{01},z_{02}),$ where
$$\|D(z_{01},z_{02})\|_{\cH^1_0\times\cH^1_0}\leq\lambda\|z_{01}-z_{02}\|_{\cH^1_0\times\cH^1_0},\quad
\|K(z_{01},z_{02})\|_{\cH^2\times\cH^2}\leq
\Lambda\|z_{01}-z_{02}\|_{\cH^1_0\times\cH^1_0}.$$ Then there exists
a bounded set $\mathcal{E}\subset\tilde{\mathcal{B}}_1$, closed and
of finite fractal dimension in $\cH^1_0\times\cH^1_0$, positively
invariant for $S(t)$, such that for some $\omega_0>0$ and
$J_0\geq0$, it holds
 \beq
 \mathrm{dist}_{\cH^1_0\times\cH^1_0}(S(t)\tilde{\mathcal{B}}_1,\mathcal{E})\leq J_0 e^{-\omega_0 t}.\label{exp1}
 \eeq

\end{lemma}
It will be shown in the appendices that the conditions $(C1)$ and
$(C2)$ in Lemma \ref{lm5} are satisfied when the dynamics of system
\eqref{3}-\eqref{4} is confined to the regular set
$\tilde{\mathcal{B}}_1$. Hence, there exists a set
$\mathcal{E}\subset\tilde{\mathcal{B}}_1$, closed and of finite
fractal dimension in $\cH^1_0\times\cH^1_0$, positively invariant
for $S(t)$ and satisfying \eqref{exp1}.

\textbf{Step 2.} In order to complete the proof, we are left to show that \eqref{exp1} actually
holds for any bounded subset $\mathcal{B} \subset (\cH^2\cap
\cH_0^1)\times(\cH^2\cap\cH^1_0)$ instead of $\tilde{\mathcal{B}}_1$, with possibly
different $J_0$ and $\omega_0$. In other words, we have to prove that the basin of
exponential attraction can be the whole space $(\cH^2\cap
\cH_0^1)\times(\cH^2\cap\cH^1_0)$ (cf. \eqref{exp}).

For any bounded set $\mathcal{B}\subset (\cH^2\cap
\cH_0^1)\times(\cH^2\cap\cH^1_0)$ with $R = \sup_{(v,\phi)\in\mathcal{B}}\|(v,\phi)\|_{\cH^1_0\times\cH^1_0}$,
 it follows from Proposition \ref{P5.2} that
 \beq
\mathrm{dist}_{\cH_0^1\times \cH_0^1}(S(t)\mathcal{B},\tilde{\mathcal{B}}_1)\leq M(R)e^{-\gamma t}.\label{lll}
 \eeq
 On the other hand, for any $z_{01}=\left(v_0^{(1)},\phi_0^{(1)}\right)$, $z_{02}=\left(v_0^{(2)},\phi_0^{(2)}\right)
 \in\mathcal{B}$, by Theorem \ref{cod} (and Poincar\'e inequality), we have
 \beq
 \|S(t)z_{01}-S(t)z_{02}\|_{\cH^1_0\times\cH^1_0}
 \leq C_PL_1^\frac12e^{\frac{L_2}{2}t}\|z_{01}-z_{02}\|_{\cH^1_0\times\cH^1_0},\label{conss}
 \eeq
 where $C_P>0$ depends only on $\Omega$. Applying the abstract result on the transitivity of exponential
attraction (cf. \cite[Theorem 5.1]{FGMZ}), we conclude from
\eqref{exp1}, \eqref{lll} and \eqref{conss} that \beq
\mathrm{dist}_{\cH_0^1\times
\cH_0^1}(S(t)\mathcal{B},\mathcal{E})\leq Je^{-\omega t},
 \eeq
where
$$
J=J(R)=C_PL_1^\frac12M(R)+J_0,\quad \omega=\frac{\gamma
\omega_0}{\frac12 L_2+\gamma+\omega_0}.
$$
The proof is complete.
\end{proof}

We note that the exponential attractor $\mathcal{E}$ actually
contains the global attractor $\mathcal{A}$ that is obtained in
Section 3. As a consequence, we have
\begin{corollary}\label{GA1}
 The global attractor $\mathcal{A}$ has finite fractal dimension.
\end{corollary}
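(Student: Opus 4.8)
The plan is to deduce finite dimensionality of $\mathcal{A}$ from the inclusion $\mathcal{A}\subset\mathcal{E}$ together with the monotonicity of the fractal dimension with respect to set inclusion. First I would recall that, by the regularity estimates of Lemma \ref{P4.1}, Lemma \ref{P4.3} and Proposition \ref{P5.3}, the global attractor $\mathcal{A}=\omega(\mathcal{B}_0)$ is in fact a bounded subset of the regular space $(\cH^2\cap\cH^1_0)\times(\cH^2\cap\cH^1_0)$: every complete bounded trajectory lying on $\mathcal{A}$ eventually enters, and then remains in, the absorbing ball $\mathcal{B}_1$ of Proposition \ref{P5.3}, and by invariance this forces $\mathcal{A}\subset\mathcal{B}_1$. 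In particular $\mathcal{A}$ is an admissible choice for the bounded set $\mathcal{B}$ appearing in part (iii) of Theorem \ref{EPA}.

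Next I would use the strict invariance of the global attractor, $S(t)\mathcal{A}=\mathcal{A}$ for every $t\geq 0$, inside the exponential attraction estimate \eqref{exp}. Choosing $\mathcal{B}=\mathcal{A}$ and $R=\sup_{(v,\phi)\in\mathcal{A}}\|(v,\phi)\|_{\cH^1_0\times\cH^1_0}$ yields
\[
\mathrm{dist}_{\cH^1_0\times\cH^1_0}(\mathcal{A},\mathcal{E})
=\mathrm{dist}_{\cH^1_0\times\cH^1_0}(S(t)\mathcal{A},\mathcal{E})\leq J(R)e^{-\omega t}\to 0
\]
as $t\to+\infty$. Hence $\mathrm{dist}_{\cH^1_0\times\cH^1_0}(\mathcal{A},\mathcal{E})=0$, and since $\mathcal{E}$ is closed in $\cH^1_0\times\cH^1_0$ by Theorem \ref{EPA}(i), we conclude $\mathcal{A}\subseteq\mathcal{E}$.

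Finally, since the fractal dimension in $\cH^1_0\times\cH^1_0$ is nondecreasing under set inclusion, $\dim_f\mathcal{A}\leq\dim_f\mathcal{E}<+\infty$ by Theorem \ref{EPA}(ii), which is the desired conclusion. I expect no genuine obstacle in this argument: the only place the earlier work is really needed is to guarantee that $\mathcal{A}$ belongs to the basin of exponential attraction, which is precisely the content of the regularity of $\mathcal{A}$; the remaining implication is the standard fact that a closed exponentially (indeed merely) attracting set must contain the global attractor.
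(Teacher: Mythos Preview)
Your argument is correct and follows essentially the same route as the paper: the paper simply remarks that the exponential attractor $\mathcal{E}$ contains the global attractor $\mathcal{A}$, and then Corollary \ref{GA1} is immediate from the monotonicity of fractal dimension. You have supplied the standard justification for the inclusion $\mathcal{A}\subset\mathcal{E}$ (invariance of $\mathcal{A}$ plus exponential attraction plus closedness of $\mathcal{E}$), together with the regularity check needed because the basin of attraction in Theorem \ref{EPA} is $(\cH^2\cap\cH^1_0)\times(\cH^2\cap\cH^1_0)$ rather than all of $\cH^1_0\times\cH^1_0$; this is exactly what the paper leaves implicit.
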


\section{Appendices}
\setcounter{equation}{0} We verify the conditions $(C1)$ and $(C2)$
in Lemma \ref{lm5} when the dynamics of system \eqref{3}-\eqref{4}
is confined to the regular set $\tilde{\mathcal{B}}_1$.

(1) \textbf{Verifying condition} $(C1)$.

\noindent For any $t,\tau\in [t^*, 2t^*]$ satisfying $t\geq \tau$,
we take the difference of the $\phi$-equation:
 \begin{eqnarray}
 &&\phi_t(t)-\phi_t(\tau)+\gamma(\phi(t)-\phi(\tau))-\frac{ig}{U}(v(t)-v(\tau))
+i\left(\frac{g^2}{U}+2\nu-2\mu\right)(\phi(t)-\phi(\tau))\nonumber\\
&& -\frac{i}{4m}(\Delta\phi(t)-\Delta\phi(\tau))=0.\nonumber
 \end{eqnarray}
Multiplying it by $\overline{\phi}(t)-\overline{\phi}(\tau)$, integrating over $\Omega$ and taking the
imaginary part, we obtain
 \begin{eqnarray}
&&\frac{1}{4m}\|\nabla\phi(t)-\nabla\phi(\tau)\|^2\nonumber\\
&=&-\left(\frac{g^2}{U}+2\nu-2\mu\right)\|\phi(t)-\phi(\tau)\|^2+\frac{g}{U}\mathrm{Re}
\int_{\Omega}(v(t)-v(\tau))(\overline{\phi}(t)-\overline{\phi}(\tau))dx
\nonumber\\
&& -\mathrm{Im}\int_{\Omega}(\phi_t(t)-\phi_t(\tau))(\overline{\phi}(t)-\overline{\phi}(\tau))dx\nonumber\\
&\leq&C(\|\phi(t)-\phi(\tau)\|^2+\|v(t)-v(\tau)\|^2
+\|\phi_t(t)-\phi_t(\tau)\|\|\phi(t)-\phi(\tau)\|). \label{est1p}
 \end{eqnarray}
  By \eqref{uniH1} and \eqref{est}, we know that for $t\geq \tilde{t}_1$,
  $\|\phi(t)\|_{\cH^1}$, $\|v(t)\|_{\cH^1}$, $\|\phi_t(t)\|$ and $\|v_t(t)\|$
  can be uniformly bounded by a constant independent of the initial data.
  Then we infer from \eqref{est1p} that
\begin{eqnarray}
&&\|\nabla \phi(t)-\nabla \phi(\tau)\|^2\nonumber\\
&\leq& C(\|\phi(t)\|+ \|\phi(\tau)\|+\|v(t)\|+\|v(\tau)\|+\|\phi_t(t)\|+
 \|\phi_t(\tau)\|)(\|v(t)-v(\tau)\|+\|\phi(t)-\phi(\tau)\|)\nonumber\\
&\leq& C\left(\int_\tau^t\|v_t(s)\|ds+\int_\tau^t\|\phi_t(s)\|ds\right)\nonumber\\
&\leq& C(t-\tau).
\end{eqnarray}
This and the Poincar\'e inequality yield that
 \beq
 \|\phi(t)- \phi(\tau)\|_{\cH^1_0}^2\leq C(t-\tau).\label{lip}
 \eeq
On the other hand, it follows from Corollary \ref{cor1} that
\beq\label{vvv}\|v(t)-v(\tau)\|_{\cH^1_0}\leq\int_{\tau}^t\|v_t(s)\|_{\cH^1_0}ds
\leq\bigg(\int_{\tau}^t\|v_t(t)\|_{\cH^1_0}^2ds\bigg)^{\frac{1}{2}}\sqrt{t-\tau}.
 \eeq
Denote $z=(v,\phi)$. For any $t^*\geq \tilde{t}_1$, $t, \tau \in
[t^*, 2t^*]$ with $t\geq \tau$, and
$z_1,z_2\in\tilde{\mathcal{B}}_1$, we infer from \eqref{contii},
\eqref{lip} and \eqref{vvv} that
\begin{eqnarray}\|S(t)z_1-S(\tau)z_2\|_{\cH^1_0\times\cH^1_0}&\leq&\|S(t)z_1-S(t)z_2\|_{\cH^1_0\times\cH^1_0}
+\|S(t)z_2-S(\tau)z_2\|_{\cH^1_0\times\cH^1_0}\nonumber\\
&\leq&
C(t^*)\bigg(\|z_1-z_2\|_{\cH^1_0\times\cH^1_0}+\sqrt{t-\tau}\bigg).\nonumber
\end{eqnarray}

(2) \textbf{Verifying condition} $(C2)$.

\noindent For any initial data
$z_{01}=\left(v_0^{(1)},\phi_0^{(1)}\right)$,
$z_{02}=\left(v_0^{(2)},\phi_0^{(2)}\right)\in\tilde{\mathcal{B}}_1$,
we set
$z_0=(v_0,\phi_0):=\left(v_0^{(1)}-v_0^{(2)},\phi_0^{(1)}-\phi_0^{(2)}\right)$.
The difference of the solutions $S(t)z_{0j}=(v^{(j)}, \phi^{(j)})$,
$j=1,2$ can be decomposed as
$$(v,\phi):=(v^{(1)}-v^{(2)}, \phi^{(1)}-\phi^{(2)})=(v^d,\phi^d)+(v^c,\phi^c),$$
where $(v^d,\phi^d)$ solves the linear problem
 \beq
 \begin{cases}\label{ddd}
 dv^d_t-i\left(a-\frac{1}{U}\right)v^d-\frac{ig}{U}\phi^d-\frac{ic}{4m}\Delta v^d=0,\\
 \phi^d_t+\gamma\phi^d+i\left(\frac{g^2}{U}+2\nu-2\mu\right)\phi^d-\frac{i}{4m}\Delta\phi^d=0,\\
v^d|_\Gamma=\phi^d|_{\Gamma}=0,\\
v^d(0)=v_0,\quad \phi^d(0)=\phi_0,\end{cases}
 \eeq
while $(v^c,\phi^c)$ satisfies
 \beq\label{jjj}
 \begin{cases}dv^c_t-i\left(a-\frac{1}{U}\right)v^c-\frac{ig}{U}\phi^c-\frac{ic}{4m}
 \Delta v^c+ib|v^{(1)}|^2v^{(1)}-ib|v^{(2)}|^2v^{(2)}=0,\\
\phi^c_t+\gamma\phi^c-\frac{ig}{U}v+i\left(\frac{g^2}{U}+2\nu-2\mu\right)\phi^c-\frac{i}{4m}\Delta\phi^c=0,\\
v^c|_{\Gamma}=\phi^c|_{\Gamma}=0,\\
v^c(0)=0,\quad\phi^c(0)=0.\end{cases}
 \eeq
 Similar to \eqref{cccc}, we have
 \beq
 \frac{1}{2}\frac{d}{dt}\|\phi^d\|_{\cH^1}^2+\gamma
 \|\phi^d\|^2_{\cH^1}=0,
 \eeq
 which implies
 \beq
 \|\phi^d(t)\|^2_{\cH^1}\leq e^{-2\gamma t}\|\phi_0\|^2_{\cH^1}, \quad t\geq
 0.\label{decaypd}
 \eeq
Multiplying the first equation in \eqref{ddd} by $\overline
{v^d}+\alpha \overline {v^d_t}$ ($\alpha>0$), integrating over
$\Omega$ and taking the imaginary part, we have
 \begin{eqnarray}
&&
\frac{d}{dt}\left[\left(\frac{\alpha}{2}\left(\frac{1}{U}-a\right)+\frac{d_i}{2}\right)\|v^d\|^2+\frac{\alpha
c}{8m}\|\nabla v^d\|^2\right]+\alpha
d_i\|v^d_t\|^2\nonumber\\
&& +\left(\frac{1}{U}-a\right)\|v^d\|^2+\frac{c}{4m}\|\nabla
v^d\|^2 \nonumber\\
&=&\frac{\alpha
g}{U}\mathrm{Re}\int_{\Omega}\phi^d\overline{v^d_t}dx+\frac{g}{U}\mathrm{Re}\int_{\Omega}\phi^d\overline{v^d}dx-d_r
\mathrm{Im}\int_{\Omega}v^d_t\overline{v^d}dx\nonumber\\
&\leq& \frac{\alpha
d_i}{4}\|v^d_t\|^2+C_1\alpha\|\phi^d\|^2+\frac{1}{2}\left(\frac{1}{U}-a\right)\|v^d\|^2+C_2(\|\phi^d\|^2+\|v^d_t\|^2).
 \end{eqnarray}
Taking $\alpha=\frac{4C_2}{d_i}$ in the above inequality, we arrive
at
\begin{eqnarray}
&&
\frac{d}{dt}\left[\left(\frac{\alpha}{2}\left(\frac{1}{U}-a\right)+\frac{d_i}{2}\right)\|v^d\|^2+\frac{\alpha
c}{8m}\|\nabla v^d\|^2\right]+\frac{\alpha
d_i}{2}\|v^d_t\|^2\nonumber\\
&& +\frac12\left(\frac{1}{U}-a\right)\|v^d\|^2+\frac{c}{4m}\|\nabla
v^d\|^2 \leq (C_1\alpha+C_2)\|\phi^d\|^2.\label{decayvda}
 \end{eqnarray}
It easily follows from \eqref{decaypd} and \eqref{decayvda} that
there exists $\gamma_1\in (0, \gamma)$
 \begin{eqnarray}
  \|v^d(t)\|_{\cH^1}^2&\leq &e^{-2\gamma_1
 t}\|v_0\|_{\cH^1}^2+Ce^{-2\gamma_1t}\int_0^t
 e^{2\gamma_1\tau}\|\phi^d(\tau)\|^2d\tau\nonumber\\
 &\leq& e^{-2\gamma_1
 t}\|v_0\|_{\cH^1}^2+\frac{C}{(\gamma-\gamma_1)}e^{-2\gamma_1
 t}\|\phi_0\|^2_{\cH^1},\quad \forall t\geq 0,\label{decayvd}
 \end{eqnarray}
 and
 \beq
 \int_0^t\|v^d_t(\tau)\|^2d\tau \leq C\|v_0\|_{\cH^1}^2+C\int_0^t
 \|\phi^d(\tau)\|^2d\tau\leq
 C(\|v_0\|_{\cH^1}^2+\|\phi_0\|^2_{\cH^1}).\label{intvd}
 \eeq
 Then \eqref{decaypd}, \eqref{decayvd} and \eqref{contii} yield that
  \begin{eqnarray}
  \|v^c(t)\|_{\cH^1}+\|\phi^c(t)\|_{\cH^1}
   &\leq&
   \|v(t)\|_{\cH^1}+\|v^d(t)\|_{\cH^1}+\|\phi(t)\|_{\cH^1}+\|\phi^d(t)\|_{\cH^1}\nonumber\\
 &\leq& C(t) \|(v_0,\phi_0)\|_{\cH^1\times\cH^1}, \quad \forall t\geq
 0.\label{dilow}
 \end{eqnarray}
Next, we try to get higher-order estimate of $(v^c,\phi^c)$. For
this purpose, we take the time derivative of equations in
\eqref{jjj}:
 \begin{eqnarray}
 &&dv^c_{tt}-i\left(a-\frac{1}{U}\right)v^c_t-\frac{ig}{U}\phi^c_t-\frac{ic}{4m}\Delta v^c_t+ib(|v^{(1)}|^2
 v^{(1)}-ib|v^{(2)}|^2v^{(2)})_t=0,\label{dvt1}\\
 && \phi^c_{tt}+\gamma\phi^c_t-\frac{ig}{U}v_t+i\left(\frac{g^2}{U}+2\nu-2\mu\right)
 \phi^c_t-\frac{i}{4m}\Delta\phi_t^c=0.\label{dpt1}
 \end{eqnarray}
Multiplying \eqref{dvt1} by $\overline {v^c_t}$, integrating over
$\Omega$ and taking the imaginary part/real part, respectively, we
have
\begin{eqnarray}
 &&\frac{d_i}{2}\frac{d}{dt}\|v^c_t\|^2+\frac{c}{4m}\|\nabla v^c_t\|^2+
 \frac{1-aU}{U}\|v^c_t\|^2\nonumber\\
 & =& \frac{g}{U}{\rm Re}\int_\Omega \phi^c_t \overline
{v^c_t} dx-b{\rm Re}\int_\Omega
(|v^{(1)}|^2v^{(1)}-|v^{(2)}|^2v^{(2)})_t\overline {v^c_t}
dx-d_r{\rm Im}\int_\Omega v^c_{tt} \overline {v^c_t} dx,\label{C1}
 \end{eqnarray}
  \beq
 \frac{d_r}{2}\frac{d}{dt}\|v^c_t\|^2-d_i{\rm Im}\int_\Omega v^c_{tt} \overline
 {v^c_t}dx= -\frac{g}{U}{\rm Im}\int_\Omega \phi^c_t \overline
{v^c_t} dx+b{\rm Im}\int_\Omega
(|v^{(1)}|^2v^{(1)}-|v^{(2)}|^2v^{(2)})_t\overline {v^c_t}
dx.\label{C1a}
 \eeq
 As in the previous section, we can insert \eqref{C1a} into \eqref{C1}
 to cancel the higher-order term ${\rm Im}\int_\Omega v^c_{tt} \overline {v^c_t}
 dx$:
\begin{eqnarray}
 &&\left( \frac{d_i}{2}+\frac{d_r^2}{2d_i}\right)\frac{d}{dt}\|v^c_t\|^2+\frac{c}{4m}\|\nabla v^c_t\|^2+
 \frac{1-aU}{U}\|v^c_t\|^2\nonumber\\
 & =& \frac{g}{U}\left({\rm Re}\int_\Omega \phi^c_t \overline
{v^c_t} dx -  \frac{d_r}{d_i}{\rm Im}\int_\Omega \phi^c_t \overline
{v^c_t} dx\right)\nonumber\\
 &&-b\left({\rm Re}\int_\Omega
(|v^{(1)}|^2v^{(1)}-|v^{(2)}|^2v^{(2)})_t\overline {v^c_t}
dx-\frac{d_r}{d_i}{\rm Im}\int_\Omega
(|v^{(1)}|^2v^{(1)}-|v^{(2)}|^2v^{(2)})_t\overline {v^c_t}
dx\right)\nonumber\\
&:=&J_1+J_2.\label{C11}
 \end{eqnarray}
 \beq
 J_1\leq C\|\phi^c_t\|\|v^c_t\|\leq \frac{\gamma}{2}\|\phi^c_t\|^2+C\|v^c_t\|^2.\label{C2}
 \eeq
 By Proposition \ref{P5.3}, we know that
 \beq
 \|v^{(j)}(t)\|_{\cH^2}+\|\phi^{(j)}(t)\|_{\cH^2}\leq C, \quad j=1,2, \ \forall t\geq 0.\label{uniHH2}
 \eeq
 It easily  follows from \eqref{uniHH2}, the Sobolev embedding theorem and the Poincar\'e
 inequality that
 \begin{eqnarray}
 J_2&\leq& C(\|v^{(1)}\|_{\mathcal{L}^\infty}+\|v^{(2)}\|_{\mathcal{L}^\infty})
  \|v\|_{\mathcal{L}^6}\|v^{(1)}_t\|\|v^c_t\|_{\mathcal{L}^3}+
 C\|v^{(2)}\|^2_{\mathcal{L}^\infty}\|v_t\|\|v^c_t\|\nonumber\\
  &\leq& C\|v\|_{\cH^1}\|\nabla v^c_t\|^\frac12\|v_t^c\|^\frac12+C\|v_t\|\|v^c_t\|\nonumber\\
  &\leq& \frac{c}{8m}\|\nabla v^c_t\|^2+ C\|v^c_t\|^2+C\|v_t\|^2+C\|v\|_{\cH^1}^2.\label{C3}
 \end{eqnarray}
 We can conclude from\eqref{C11}, \eqref{C2} and \eqref{C3}  that
 \beq
 \left(\frac{d_i}{2}+\frac{d_r^2}{2d_i}\right)\frac{d}{dt}\|v^c_t\|^2+\frac{c}{8m}\|\nabla v^c_t\|^2\leq
 \frac{\gamma}{2}\|\phi^c_t\|^2+C\|v^c_t\|^2+C\|v_t\|^2+C\|v\|_{\cH^1}^2.\label{CC}
 \eeq
On the other hand, similar to \eqref{ppt}, there holds
 \beq
 \frac{d}{dt}\|\phi^c_t\|^2+\gamma\|\phi^c_t\|^2\leq \frac{g^2}{\gamma U^2}\|v_t\|^2.\label{C4}
 \eeq
 Then it follows from \eqref{CC} and \eqref{C4} that
 \beq
 \frac{d}{dt}\left[\left(\frac{d_i}{2}+\frac{d_r^2}{2d_i}\right)\|v^c_t\|^2+\|\phi^c_t\|^2\right]
 \leq C\|v^c_t\|^2+C\|v_t\|^2+C\|v\|_{\cH^1}^2.
 \eeq
Integrating with respect to time, using \eqref{contii} and
\eqref{dilow}, we get
 \begin{eqnarray}
 && \left(\frac{d_i}{2}+\frac{d_r^2}{2d_i}\right)\|v^c_t(t)\|^2+\|\phi^c_t(t)\|^2
 \leq
 C\int_0^t(\|v(\tau)\|^2_{\cH^1}+\|v_t(\tau)\|^2+\|v^c_t(\tau)\|^2)d\tau\nonumber\\
 &\leq&
 C\int_0^t(\|v(\tau)\|^2_{\cH^1}+\|v_t(\tau)\|^2+\|v^d_t(\tau)\|^2)d\tau\leq
  C(t)(\|v_0\|_{\cH^1_0}^2+\|\phi_0\|^2_{\cH^1_0}).\label{dihigh}
 \end{eqnarray}
 By \eqref{contii}, \eqref{dilow}, \eqref{dihigh} and the Sobolev embedding theorem, we deduce from equation
 \eqref{jjj} and the elliptic regularity theorem that
 \begin{eqnarray}
 \|v^c(t)\|_{\cH^2}+\|\phi^c(t)\|_{\cH^2}&\leq&
 C(\|v^c_t(t)\|+\|\phi^c_t(t)\|+\|v(t)\|_{\cH^1}+\|v^c(t)\|+\|\phi^c(t)\|)\nonumber\\
 &\leq& C(t)(\|v_0\|_{\cH^1_0}+\|\phi_0\|_{\cH^1_0}).\label{KK}
 \end{eqnarray}
 Due to \eqref{decaypd} and \eqref{decayvd}, for any fixed $\lambda\in (0,\frac12)$,
 we can choose $t^*\geq \tilde{t}_1$ sufficiently large such
 that
 \beq
  \|v^d(t^*)\|^2_{\cH^1_0}+\|\phi^d(t^*)\|^2_{\cH^1_0}
  \leq \lambda^2(\|v_0\|_{\cH^1_0}^2+\|\phi_0\|^2_{\cH^1_0}).\label{DD}
 \eeq
 Set
 \beq
 D(z_{01},z_{02})=(v^d(t^*),\phi^d(t^*)),\quad
 K(z_{01},z_{02})=(v^c(t^*),\phi^c(t^*)).
 \eeq
 It follows from \eqref{KK} and \eqref{DD} that condition $(C2)$ are
 satisfied.

\bigskip

\textbf{Acknowledgement.} The authors want to thank Prof. S. Zheng
for his helpful discussions. The research of H. Wu was partially
supported by Natural Science Foundation of China 11001058.

 \end{document}